\newtheorem{theorem}{Theorem}[section]
\newtheorem{lemma}[theorem]{Lemma}
\newtheorem{problem}[theorem]{Problem}
\newtheorem{corollary}[theorem]{Corollary}
\newtheorem{observation}[theorem]{Observation}
\def\dd{\hbox{-}}   
\newcommand{\mca}{\mathcal}
\newcommand{\poi}{\mathbb{N}}
\newcounter{tbox}
\newcommand{\sta}[1]{\medskip\medskip\refstepcounter{tbox}\noindent{\parbox{\textwidth}{(\thetbox) \emph{#1}}}\vspace*{0.3cm}}
\newcommand{\mylongtitle}[1]{%
  \ifodd\value{page}%
    \protect\parbox{0.97\linewidth}{#1}\hfill%
  \else%
    \hfill\protect\parbox{0.97\linewidth}{#1}%
  \fi%
}
\title[Suns in triangle-free graphs of large chromatic number]{Suns in triangle-free graphs of large chromatic number}
\author{Sepehr Hajebi$^{\dagger}$}
\author{Sophie Spirkl$^{\dagger \ast}$}
\thanks{$^{\dagger}$ Department of Combinatorics and Optimization, University of Waterloo, Waterloo, Ontario, Canada}
\thanks{$^{\ast}$ We acknowledge the support of the Natural Sciences and Engineering Research Council of Canada (NSERC), [funding reference number RGPIN-2020-03912].
Cette recherche a \'et\'e financ\'ee par le Conseil de recherches en sciences naturelles et en g\'enie du Canada (CRSNG), [num\'ero de r\'ef\'erence RGPIN-2020-03912]. This project was funded in part by the Government of Ontario. This research was conducted while Spirkl was an Alfred P. Sloan Fellow.}
\date{\today}
\begin{document}
\maketitle

\begin{abstract}

For an integer $t\geq 4$, a \textit{$t$-sun} is a graph obtained from a $t$-vertex cycle $C$ by adding a degree-one neighbor for each vertex of $C$. Trotignon asked whether every triangle-free graph of sufficiently large chromatic number has an induced subgraph that is a $t$-sun for some $t\geq 4$. This remains open, but we show that every triangle-free graph of chromatic number at least $48$ has an induced subgraph that is either a $t$-sun for some $t\geq 5$, or a $4$-sun with a single degree-one vertex deleted. In fact, we prove that for all $\ell\geq 5$, there exists $c=c(\ell)\in \poi$ such that every triangle-free graph of chromatic number at least $c$ has an induced subgraph that is either a $t$-sun for some $t\geq 
\ell$, or a $4$-sun with a single degree-one vertex deleted.
\end{abstract}
 
\section{Introduction}

The set of all positive integers is denoted by $\poi$, and for integers $k,k'$, we denote by $\{k,\ldots, k'\}$ the set of all integers no smaller than $k$ and no larger than $k'$ (thus, $\{k,\ldots, k'\}=\varnothing$ if and only if $k'<k$). Graphs in this paper have finite vertex sets, no loops, and no parallel edges. We use the same notation for induced subgraph and their vertex sets; in particular, for an induced subgraph $H$ of a graph $G$, we also use $H$ to denote the vertex set of $H$. The chromatic number and the clique number of a graph $G$ are denoted, in order, by $\chi(G)$ and $\omega(G)$. Recall that a graph $G$ is \textit{triangle-free} if $\omega(G)\leq 2$.

Let $t\geq 4$ be an integer. A \textit{$t$-sun} is the graph obtained from a $t$-vertex cycle by adding a degree-one neighbor for each vertex of the cycle. A graph $G$ is \textit{$t$-sun-free} if no induced subgraph of $G$ is a $t$-sun, and $G$ is \textit{sun-free} if $G$ is $t$-sun-free for all $t\geq 4$. Recently, Trotignon \cite{barbados2025} proposed the following beautiful problem:

\begin{problem}[Trotignon \cite{barbados2025}]\label{prb:trotignon}
Is $\chi(G)$ bounded for every triangle-free sun-free graph $G$?
\end{problem}

This remains open, but we come pretty close to a solution. Given an integer $t\geq 4$, a \textit{$t$-sunspot} is the graph obtained from a $t$-sun by removing a single degree-one vertex. A graph $G$ is \textit{$t$-sunspot-free} if no induced subgraph of $G$ is a $t$-sunspot (so, being $t$-sunspot-free implies being $t$-sun-free). We prove the following (see Figure~\ref{fig:suns}):

\begin{theorem}\label{thm:maintrianglefreeshort}
 Let $G$ be a graph that is triangle-free, $4$-sunspot-free, and $t$-sun-free for all $t\geq 5$. Then $\chi(G)\leq 47$.  
\end{theorem}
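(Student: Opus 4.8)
\medskip
\noindent\textbf{Proof strategy.}
The plan is to assume for a contradiction that $\chi(G)\ge 44$ and to exhibit an induced $4$-sunspot or an induced $t$-sun with $t\ge 5$. First I would carry out the usual reductions: replace $G$ by a connected, vertex-critical induced subgraph of the same chromatic number, so that every vertex has degree at least $\chi(G)-1\ge 43$; this reservoir of high degree is what keeps a supply of potential pendants available throughout. Then I would fix a breadth-first-search layering $L_0=\{z\},L_1,L_2,\dots$ from a vertex $z$. Triangle-freeness makes $L_1=N(z)$ a stable set and confines every edge to one layer or to two consecutive layers, so splitting the layers by parity and using $\chi(L_1)\le 1$ gives $\chi(G)\le 2\max_{k\ge 2}\chi(L_k)$; hence some layer $L_k$ with $k\ge 2$ has $\chi(L_k)\ge 22$.

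\medskip
Next comes the object to be decorated: an induced hole. For the bound $\chi(G)\le 43$ this is immediate --- $\chi(G)\ge 44$ forces $G$ non-bipartite, and a shortest odd cycle is an induced odd hole $H=h_0h_1\cdots h_{s-1}$ of length $s\ge 5$. (For the general statement one instead runs the standard ``levelling''/Gy\'arf\'as-path argument inside the layered structure --- repeatedly peeling a vertex off the current large-chromatic part, and, via the classical device of joining two induced paths that share only a far endpoint, assembling an induced hole of length exceeding $\ell$; this is where the hypothesis $\chi(G)\ge c(\ell)$ is consumed.) In either case I would keep alongside $H$, throughout, a connected induced subgraph of still-large chromatic number, to serve as working room for the cleaning below.

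\medskip
The heart of the proof is to harvest pendants for $H$. Each $h_i$ has at least $41$ neighbours outside $\{h_{i-1},h_{i+1}\}$, all candidate pendants at $h_i$, and by triangle-freeness every such candidate is automatically non-adjacent to $h_{i-1}$ and $h_{i+1}$. The obstructions to be removed are: (i) a ``jump'', i.e.\ a candidate pendant at $h_i$ also adjacent to a non-consecutive vertex of $H$; and (ii) adjacencies among the chosen pendants (or between them and the retained large-chromatic part). The guiding principle is that any such obstruction, together with enough of the surrounding structure, already displays a forbidden graph: a length-two jump flanked by pendants produces an induced $4$-sunspot, a longer jump with pendants running along it produces an induced $t$-sun with $t\ge 5$, and entangled pendants lead to similar configurations. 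Hence, by repeatedly shrinking $H$ to a sub-hole and the attached large-chromatic subgraph to a sub-blob --- using that the relevant neighbourhoods are stable sets, pigeonhole to bound how many candidates a single vertex can spoil, and the degree reservoir to replenish candidates --- one may assume no obstruction survives. Executing all of this at once, and in particular disposing of the ``$K_{2,n}$-type'' local configurations that can block a given $h_i$ from receiving a pendant without costing any chromatic number, is where the difficulty concentrates; I expect this cleaning phase to be the main obstacle and the source of the dependence of $c(\ell)$ on $\ell$.

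\medskip
Once the cleaning goes through we have an induced hole $H$ of length $s\ge 5$ equipped with a private pendant at each of its vertices, i.e.\ an induced $s$-sun, contradicting $t$-sun-freeness; and in the sole degenerate regime, where the construction can be driven only to a hole of length exactly $4$ with pendants recovered at three of its four vertices, we obtain an induced $4$-sunspot instead. Either way the hypotheses are contradicted, so $\chi(G)\le 43$. Rerunning the argument with the target hole length --- and hence the number of levelling and cleaning steps --- chosen as a function of $\ell$ yields the general bound $c(\ell)$.
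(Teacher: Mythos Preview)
Your outline has the right silhouette---level, find a hole, attach pendants---but the ``cleaning phase'' you flag as the main obstacle is in fact the entire proof, and the proposal does not contain the ideas that make it go through. Two structural points first: the paper levels \emph{before} passing to a critical subgraph, not after, so that the deepest layer $L_r$ itself is liberal (your order of operations gives criticality of $G$ but not of $L_k$, and liberality of $L_k$ is used repeatedly); and the depth must be $r\ge 3$, not $k\ge 2$, because the arguments ruling out short sectors need a neighbour two levels up. With that in place, the key notion you are missing is \emph{flaplessness}: using $4$-sunspot-freeness together with the three-level scaffolding above $L_r$, one shows that no $4$-hole of $L_r$ shares an edge with any hole of length $\ge 6$ in $L_r$. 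This is not a local by-product of ``a length-two jump flanked by pendants produces a $4$-sunspot''; it is a global property of $L_r$ established through a sequence of lemmas, and it is what powers the pendant harvest.

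Once $L_r$ is liberal and flapless, the pendant step is not an ad hoc cleaning but a counting argument: for a hole $H$ of length $\ge 6$ and any $h'\in H$, any vertex $x\in \Phi[h]$ has at most one common neighbour with $h'$ off $H$ (this is where flaplessness and liberality bite), so high minimum degree lets you greedily build a full $H$-flare whose pendants are anticomplete to everything within distance $\ell$ along $H$. Finally, the pendants are made \emph{globally} anticomplete not by further cleaning but by choosing $H$ to be a \emph{shortest} hole of length $\ge \ell$: any stray adjacency among pendants would yield a strictly shorter such hole. Note also that the hole used is of length $\ge 6$ (obtained via the Scott--Seymour bound $\chi>20$ for triangle-free graphs with no hole of length $\ge 6$), not a shortest odd cycle; a $5$-hole does not interact with the flapless machinery. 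Your ``degenerate regime'' producing a $4$-sunspot never arises: $4$-sunspot-freeness is consumed upstream to force flaplessness, not recovered as a terminal contradiction.
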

\begin{figure}[t!]
    \centering
    \includegraphics[scale=0.8]{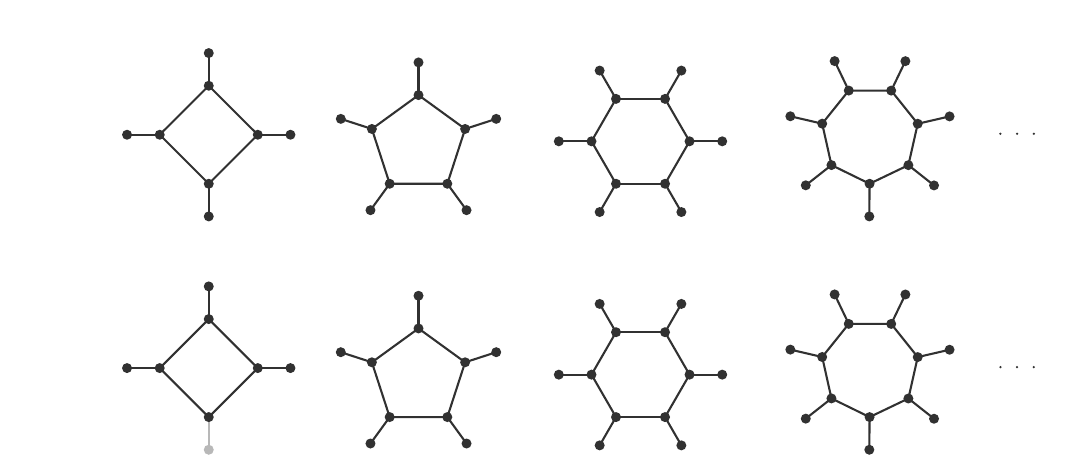}
    \caption{Suns (top) versus what we exclude in \Cref{thm:maintrianglefreeshort} (bottom).}
    \label{fig:suns}
\end{figure}

In particular, we will only work with $4$-sunspots, so let us customize the notation and terminology in this specific case. Given a graph $G$, by a \textit{$4$-sunspot in $G$} we mean a $7$-tuple $(x_1,x_2,x_3,x_4;y_1,y_2,y_3)$ of pairwise distinct vertices of $G$ such that 
$$E(G[\{x_1,x_2,x_3,x_4,y_1,y_2,y_3\}])=\{x_1x_2,x_2x_3,x_3x_4,x_4x_1, x_1y_1,x_2y_2,x_3y_3\}.$$
It follows that $G$ is $4$-sunspot-free if and only if there is no $4$-sunspot in $G$.

As pointed out in \cite{barbados2025}, it is not difficult check that ``shift graphs'' (see \cite{chisurvey}) provide a construction of triangle-free graphs with arbitrarily large chromatic number and no induced subgraph that is a $t$-sun for any $t\geq 5$ (we omit the details). This means that excluding $4$-suns is essential for the answer to \Cref{prb:trotignon} to be affirmative. However, provided the $4$-suns are excluded, it could be that the answer to \Cref{prb:trotignon} is ``yes'' even if in addition we only exclude $t$-suns for all $t\geq \ell$, where $\ell\geq 5$ is a prescribed constant. At least, the corresponding extension of \Cref{thm:maintrianglefreeshort} is true, and that is the main result of this paper:

\begin{restatable}{theorem}{maintrianglefree}\label{thm:maintrianglefree}
For every integer $\ell\geq 6$, there is a constant $c_{\ref{thm:maintrianglefree}}=c_{\ref{thm:maintrianglefree}}(\ell)\in \poi$ such that $\chi(G)\leq c_{\ref{thm:maintrianglefree}}$ for every graph $G$ that is triangle-free, $4$-sunspot-free, and $t$-sun-free for all $t\geq \ell$. Moreover, $c_{\ref{thm:maintrianglefree}}(6)=47$ works. 
\end{restatable}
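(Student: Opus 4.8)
The plan is to bound $\chi(G)$ by a levelling argument: reduce the problem to bounding the chromatic number of a single breadth-first search level, and then show that a level of large chromatic number would force a long sun. We describe the case of general $\ell\geq 6$; the numerical refinement for $\ell=6$ comes from running the same argument with the constants tracked.

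First, the reductions. We may assume $G$ is connected, since $\chi(G)$ is the maximum of $\chi$ over the components of $G$. Fix $r\in V(G)$ and, for $i\geq 0$, let $L_i$ be the set of vertices at distance exactly $i$ from $r$. There are no edges of $G$ between $L_i$ and $L_j$ when $|i-j|\geq 2$, so colouring each $G[L_i]$ with a private palette and recording the parity of $i$ yields a proper colouring of $G$ with $O(\max_i\chi(G[L_i]))$ colours; hence it suffices to bound $\chi(G[L_i])$ for every $i$ by a constant depending only on $\ell$, and for $\ell=6$ we will arrange that this constant is small enough to give $\chi(G)\leq 43$. The point is that $G[L_i]$ inherits the hypotheses --- it is triangle-free, $4$-sunspot-free, and $t$-sun-free for all $t\geq\ell$ --- and, for $i\geq 1$, carries extra structure: every vertex of $L_i$ has a neighbour in $L_{i-1}$, its \emph{parent}, and the vertices of $L_{i-1}$ are pairwise linked by paths through $L_0\cup\cdots\cup L_{i-1}$. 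These features are what we exploit.

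Now suppose $\chi(G[L_i])$ is large. Since graphs in which every hole is short are $\chi$-bounded in terms of the clique number (Chudnovsky, Scott and Seymour), and $\omega(G[L_i])\leq 2$, the graph $G[L_i]$ contains an induced cycle $C$ of length at least $N$, where $N$ may be taken as large as we like as a function of $\ell$. Each $c\in C$ has a parent $w(c)\in L_{i-1}$, and triangle-freeness forces $w(c)$ to be non-adjacent to the two neighbours of $c$ in $C$; so the family $\{w(c):c\in C\}$ behaves almost like a system of pendants on $C$. If we could arrange that the $w(c)$ are pairwise distinct, pairwise non-adjacent, and each adjacent to exactly one vertex of $C$, then $C$ together with this family would be a $|C|$-sun; as $|C|\geq N\geq\ell$, this would contradict $t$-sun-freeness and we would be done.

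The crux, and the main obstacle, is therefore the \emph{cleaning} step: upgrading the pair $(C,\{w(c)\})$ to a long induced cycle with a genuine system of private pendants. Three things can go wrong: two vertices of $C$ share a parent; the parents of two vertices of $C$ are adjacent; or a parent is adjacent to several vertices of $C$. In each case the strategy is first to use $4$-sunspot-freeness, together with triangle-freeness, to forbid the most damaging configurations --- for instance, a vertex $u$ outside an induced path $P=p_0p_1\cdots$ cannot have neighbours $p_a,p_{a+2},p_{a+4},p_{a+6}$ on $P$ with $a\geq 1$, since then $(p_a,u,p_{a+2},p_{a+1};p_{a-1},p_{a+6},p_{a+3})$ is a $4$-sunspot --- and then to reroute $C$ through the offending parents; since these lie one level closer to $r$ than $C$, the rerouting is controlled and yields a shorter but still long induced cycle on which the pendant structure is strictly improved, so that iterating terminates with the desired long sun, a contradiction. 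For arbitrary $\ell\geq 6$ the resulting bound is governed by (and is as enormous as) the Chudnovsky--Scott--Seymour long-hole bound; for $\ell=6$, the fact that the only small suns allowed are $5$-suns makes the case analysis tight enough that a careful accounting yields $\chi(G)\leq 43$.
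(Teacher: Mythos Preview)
Your outline shares the paper's broad shape --- BFS levels, a long hole via Chudnovsky--Scott--Seymour, then attach pendants --- but the cleaning step is where all the work is, and your mechanism for it does not go through. You propose to use parents in $L_{i-1}$ as pendants and, when they misbehave, to reroute $C$ through them and iterate. But once you reroute, the new cycle lives in both $L_i$ and $L_{i-1}$; the new vertices need pendants of their own, presumably from $L_{i-2}$, and nothing controls how those interact with $C$ or with each other. There is no well-defined measure that strictly improves, and no reason the cycle stays long: a parent adjacent to two far-apart vertices of $C$ collapses one arc to length~$2$, so a few such reroutings can destroy the length-$\geq\ell$ hypothesis entirely. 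The one forbidden configuration you exhibit (a parent hitting $p_a,p_{a+2},p_{a+4},p_{a+6}$) is very far from exhausting the obstructions --- adjacent parents, shared parents, widely spaced attachments all need separate treatment --- and your iteration is not shown to handle any of them.

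The paper's route is genuinely different from the one you sketch. Using levels down to depth three (so that grandparents and great-grandparents are available), it first proves, via substantial case analysis with $4$-sunspot-freeness, that a vertex-minimal high-$\chi$ level $L$ is \emph{flapless}: no $4$-hole of $L$ shares an edge with any hole of length $\geq 6$ in $L$. Pendants are then found not as parents but \emph{inside} $L$: vertex-minimality forces minimum degree at least $\chi(L)-1\geq 4\ell-3$, and flaplessness plus liberality bounds common neighbours, so a greedy argument attaches a private pendant to each hole vertex, anticomplete up to distance $\ell$ along the hole. Long-range interactions are then killed not by rerouting but by taking a \emph{shortest} hole $H$ of length $\geq\ell$: any unwanted edge between pendants would create a strictly shorter hole of length $\geq\ell$. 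This is also where the constant $2\max\{20,4\ell-3\}+1=43$ for $\ell=6$ comes from; your parity-of-levels bookkeeping alone does not yield it.
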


The proof of \Cref{thm:maintrianglefree} will be completed in the last section. 
\medskip

We conclude the introduction by discussing another problem of Trotignon \cite{barbados2025} about extending \Cref{prb:trotignon} to graphs of bounded clique number (for a general bound, instead of just triangle-free graphs).  A graph class $\mca{C}$ is \textit{$\chi$-bounded} if there is a function $f:\poi\rightarrow \poi$ such that $\chi(G)\leq f(\omega(G))$ for every graph $G\in \mca{C}$. A \textit{net} is the graph obtained from a $3$-vertex cycle by adding a degree-one neighbor for each vertex of the cycle, and a graph $G$ is \textit{net-free} if no induced subgraph of $G$ is a net.

\begin{problem}[Trotignon \cite{barbados2025}]\label{prb:trotignon2}
Is the class of all net-free sun-free graphs $\chi$-bounded?
\end{problem}

(A remark: In \cite{barbados2025}, a net is called a ``$3$-sun,'' and a net-free sun-free graph is said to be ``sun-free.'' But we prefer our terminology, especially because the name net is pretty standard. Another reason is that we are not sure why the net needs to be excluded in \Cref{prb:trotignon2}; it could be true that -- again, in our terminology -- the class of all sun-free graphs is already $\chi$-bounded.)

It turns out that a partial answer to \Cref{prb:trotignon2}, analogous to \Cref{thm:maintrianglefree}, can in fact be derived from \Cref{thm:maintrianglefree}. A \textit{bull} is the graph obtained from a net by removing a degree-one vertex, and a graph $G$ is \textit{bull-free} if no induced subgraph of $G$ is a bull. The following was recently proved in \cite{bullfree} (the logarithm is base 2):

\begin{theorem}[Hajebi \cite{bullfree}]\label{thm:bullfree}
    Let $t\in \poi$ and let $G$ be a bull-free graph in which every triangle-free induced subgraph has chromatic number at most $t$. Then we have $\chi(G)\leq \omega(G)^{4\log t+13}$.
\end{theorem}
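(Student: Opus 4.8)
The plan is to prove \Cref{thm:bullfree} by induction, ``spending'' the triangle-free hypothesis as the argument progresses, and using a structural understanding of bull-free graphs --- most plausibly Chudnovsky's structure theorem for bull-free graphs --- as the main engine. Write $p=4\log t+13$; the target is $\chi(G)\le\omega(G)^{p}$. The base case is $\omega(G)\le 2$: then $G$ is triangle-free, so $\chi(G)\le t$, and since $\omega(G)\ge 2$ as soon as $G$ has an edge, $\omega(G)^{p}\ge 2^{4\log t}=t^{4}\ge t$. So from now on $\omega(G)=\omega\ge 3$, the bound is assumed for all bull-free graphs with the same $t$ and clique number less than $\omega$, and in particular $G$ has a triangle.

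I would then reduce to the case that $G$ is prime --- equivalently, has no nontrivial homogeneous set --- which also takes care of disconnected and co-disconnected $G$. This is routine, but requires some care about how $\chi$ and $\omega$ behave under substitution; the relevant points are that substitution preserves bull-freeness (the bull being self-complementary), that every module and quotient appearing is again bull-free with all triangle-free induced subgraphs of chromatic number at most $t$, and that a bound of the form $\omega^{p}$ with $p\ge 2$ is stable under substitution because of the super-additivity estimates $a^{p}+b^{p}\le(a+b)^{p}$ and $a^{p}+b^{p}-1\le(a+b-1)^{p}$ for $a,b\ge 2$ (used, respectively, to glue along a ``join'' and along a general substitution into a prime graph).

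So assume $G$ is prime, connected, co-connected, and has a triangle. Here I would apply the structure theorem for bull-free graphs: $G$ is then either of bounded size, or admits one of a small number of decompositions (homogeneous pairs and the like), or belongs to an explicit short list of basic classes. The bounded-size case is trivial, and the decomposition cases are absorbed by the same substitution-style accounting as above: each cut splits $G$ into pieces of strictly smaller clique number, which are coloured by the inductive hypothesis and reassembled with only bounded overhead. The substance is the basic classes, all of which are built in prescribed ways from \emph{triangle-free} ingredients together with a bounded number of cliques. For each, the plan is to isolate a triangle-free ``core'' $F$ (so $\chi(F)\le t$ by hypothesis) and a bounded family of cliques attached to $F$ in a controlled pattern, and to build a colouring of $G$ from a colouring of $F$ and colourings of the cliques; each clique costs a factor of roughly $\omega$, so a single such pass already gives a bound of the shape $\omega^{O(1)}\cdot t$. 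The exponent $\log t$ should then come from iterating this reduction: after removing one triangle-free ``layer'' of $F$ together with the cliques hanging off it, what remains lies in essentially the same class but with its triangle-free content smaller by a constant factor, so on the order of $\log t$ rounds bring that content down to a constant, and the per-round cost $\omega^{O(1)}$ multiplies to $\omega^{O(\log t)}$; a careful accounting is what makes the exponent exactly $4\log t+13$.

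The step I expect to be the main obstacle is precisely this last, quantitative one. Chudnovsky's basic classes are defined in terms of bull-free trigraphs and expansions of homogeneous pairs, and their descriptions are not short; one has to set things up so that, in each of them, a single peeling round loses only a factor $\omega^{4}$ and so that $\log t$ rounds suffice --- that is, one must upgrade the soft statement ``$\chi$ is controlled by the triangle-free part'' into the sharp polynomial bound with the right exponent. A secondary, more mechanical difficulty is controlling the overhead in the primality and decomposition reductions tightly enough that the final exponent is not pushed past $4\log t+13$.
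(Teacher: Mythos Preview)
The paper does not prove this theorem. Theorem~\ref{thm:bullfree} is quoted from \cite{bullfree} as an external result and applied as a black box (combined with Theorem~\ref{thm:maintrianglefree} to deduce Theorem~\ref{thm:mainlong}); no argument for it appears anywhere in the present paper. So there is no ``paper's own proof'' to compare your proposal against.

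On the proposal itself: the overall shape --- reduce to prime via modular decomposition, then invoke Chudnovsky's structure theorem for bull-free graphs and analyse the basic classes --- is the expected route to a bound of this kind, but as you yourself flag, essentially all of the content lies in the step you call the main obstacle, and that step is not carried out. In particular, the description of the basic classes as ``a triangle-free core plus boundedly many cliques attached in a controlled pattern'' is a significant oversimplification of what Chudnovsky's theorem actually outputs, and the iterative ``peeling'' that you say should halve the triangle-free content each round is asserted rather than demonstrated. What you have written is a plausible plan, not a proof; for the actual argument and the source of the specific exponent $4\log t+13$ you would need to consult \cite{bullfree}.
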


Since $\log 43<6$, from \Cref{thm:maintrianglefree,thm:bullfree}, it follows that:

\begin{theorem}\label{thm:mainlong}
Let $G$ be a graph that is bull-free, $4$-sunspot-free, and $t$-sun-free for all $t\geq 5$. Then $\chi(G)\leq \omega(G)^{37}$. 
\end{theorem}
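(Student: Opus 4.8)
The plan is to obtain \Cref{thm:mainlong} as an immediate consequence of \Cref{thm:maintrianglefree} and \Cref{thm:bullfree}; there is no real obstacle here beyond lining up the hypotheses and a one-line numerical check, so the ``proof'' is essentially a two-step combination of the two cited theorems.

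First I would note that all the relevant properties are hereditary under taking induced subgraphs: being triangle-free, being bull-free, being $4$-sunspot-free, and being $t$-sun-free for a fixed $t$. So, given $G$ as in the statement, let $H$ be an arbitrary triangle-free induced subgraph of $G$. Then $H$ is triangle-free, $4$-sunspot-free, and $t$-sun-free for every $t\geq 5$; in particular $H$ is $t$-sun-free for every $t\geq 6$. Applying \Cref{thm:maintrianglefree} with $\ell=6$, and invoking the ``moreover'' clause $c_{\ref{thm:maintrianglefree}}(6)=43$, gives $\chi(H)\leq 43$. The only thing worth emphasizing is exactly this alignment of hypotheses: excluding all $t$-suns with $t\geq 5$ is (strictly) stronger than excluding all $t$-suns with $t\geq 6$, which is what \Cref{thm:maintrianglefree} requires when $\ell=6$.

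Since $H$ was an arbitrary triangle-free induced subgraph of $G$, we conclude that $G$ is bull-free and every triangle-free induced subgraph of $G$ has chromatic number at most $43$. Then \Cref{thm:bullfree}, applied with $t=43$, yields $\chi(G)\leq \omega(G)^{4\log 43+13}$. To finish, I would check the exponent: $2^{5.5}=32\sqrt 2>45>43$, hence $\log 43<5.5$ and $4\log 43+13<35$. If $G$ is nonempty then $\omega(G)\geq 1$, so $\omega(G)^{4\log 43+13}\leq \omega(G)^{35}$, giving $\chi(G)\leq \omega(G)^{35}$; and if $G$ is empty the inequality is trivial.
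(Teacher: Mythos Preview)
Your proposal is correct and is exactly the argument the paper intends: apply \Cref{thm:maintrianglefree} with $\ell=6$ to every triangle-free induced subgraph to get the bound $43$, then feed $t=43$ into \Cref{thm:bullfree} and use $\log 43<5.5$ to bound the exponent by $35$. The paper compresses this into a single sentence, but your expanded version, including the hereditariness remark and the numerical check, matches it precisely.
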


In fact, for every integer $\ell\geq 6$, there exists $d=d(\ell)\in \poi$ such that $\chi(G)\leq \omega(G)^d$ for every graph $G$ that is bull-free, $4$-sunspot-free, and $t$-sun-free for all $t\geq \ell$.

\section{Flaps}\label{sec:flap}

Let $G$ be a graph. For $x\in V(G)$ and $Y\subseteq V(G)$, we denote by $N_Y(x)$ the set of all vertices in $Y$ that are adjacent to $x$ in $G$. A \textit{hole in $G$} is an induced subgraph $H$ of $G$ that is a cycle on four or more vertices. The \textit{length} of $H$ is the number of edges of $H$; if $H$ has length $\ell\geq 4$, then we also say that $H$ is an \textit{$\ell$-hole in $G$}, and we write $H=h_1\dd \cdots h_{\ell}\dd h_1$ to mean $V(H)=\{h_1,\ldots, h_{\ell}\}$ and $E(H)=\{h_{i}h_{i+1}:i\in \{1,\ldots, \ell-1\}\}\cup \{h_{\ell}h_1\}$. An \textit{$H$-flap in $G$} is a $4$-hole in $G$ that shares at least one edge with $H$. We say that a graph $G$ is
\begin{itemize}
    \item \textit{non-degenerate} if every vertex in $G$ has degree at least $\chi(G)-1$;
    \item \textit{liberal} for all distinct and non-adjacent $x,y\in V(G)$, neither $x$ nor $y$ ``dominates'' the other; that is, we have $N_{G}(x)\setminus N_{G}(y)\neq \varnothing$ and $N_{G}(y)\setminus N_{G}(x)\neq \varnothing$; and
    \item \textit{flapless} if for every hole $H$ of length at least $6$ in $G$, there is no $H$-flap in $G$. 
\end{itemize}

Our goal in this section is to prove the following:

\begin{theorem}\label{thm:flap}
    Let $c\in \poi$ and let $G$ be a triangle-free $4$-sunspot-free graph such that $\chi(G)> 2c+1$. Then $G$ has an induced subgraph $L$ with $\chi(L)=c+1$ that is non-degenerate, liberal, and flapless.
\end{theorem}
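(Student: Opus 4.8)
The plan is to extract $L$ from $G$ by a minimality argument. First I would pass to a subgraph $L$ of $G$ that is vertex-minimal subject to $\chi(L)\ge c+1$; then automatically $\chi(L)=c+1$, and moreover $L$ is non-degenerate, since deleting any vertex of degree $<\chi(L)-1=c$ leaves a graph still colorable with $c$ colors by greedy extension, contradicting minimality. Next I would deal with liberality. If $x,y$ are non-adjacent with $N_L(x)\subseteq N_L(y)$, then one can identify $x$ and $y$ (or just delete $x$) without decreasing the chromatic number: a proper coloring of $L\setminus x$ extends to $x$ by giving it the color of $y$, so $\chi(L\setminus x)\ge \chi(L)=c+1$, again contradicting minimality. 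The subtlety is that both operations (deleting low-degree vertices, deleting a vertex whose neighborhood is contained in another's) must be applied together, so I would instead take $L$ minimal under the combined move: iterate "delete a vertex that is either low-degree or has a dominated neighborhood" until stuck. The resulting $L$ has $\chi(L)\ge c+1$, is triangle-free and $4$-sunspot-free (both properties are hereditary), is non-degenerate, and is liberal — but it need not yet be flapless, which is the real content.

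The hard part is flaplessness. So far minimality has cost us nothing, but flaplessness is not a hereditary property and cannot be achieved by simply deleting vertices while controlling $\chi$; this is where the hypothesis $\chi(G)>2c+1$ (rather than just $\chi(G)\ge c+1$) must be used, presumably to give room to delete a whole vertex subset. The idea I would pursue: suppose $H=h_1\hbox{-}\cdots h_\ell\hbox{-}h_1$ is a hole of length $\ell\ge 6$ in $L$ (or in whatever graph we currently have) and $F$ is an $H$-flap, i.e.\ a $4$-hole sharing an edge, say $h_1h_2$, with $H$. Write $F=h_1\hbox{-}h_2\hbox{-}a\hbox{-}b\hbox{-}h_1$ where $a,b\notin V(H)$ (or one of them may be another vertex of $H$, which must be handled as a degenerate case). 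The presence of this $4$-hole together with the long hole should let us build a $4$-sunspot: the $4$-cycle $h_1h_2ab$ plays the role of the square $x_1x_2x_3x_4$, and the two hole-neighbors $h_\ell$ (attached to $h_1$) and $h_3$ (attached to $h_2$) supply two of the three pendant vertices $y_1,y_2$; the third pendant $y_3$ must be found adjacent to $a$ but not to $b,h_1,h_2$, which is where triangle-freeness, the length $\ell\ge 6$, and non-adjacency constraints coming from $H$ being an induced hole get used. Since $L$ is $4$-sunspot-free, this is a contradiction — so any such $L$ is automatically flapless. If that cleaner route works, then flaplessness is \emph{also} free and one does not even need $\chi(G)>2c+1$; the factor-$2$ slack must then be there for a reason I am missing, so more likely the honest argument is: the obstruction to flaplessness is exactly a bounded-diameter gadget, one removes an appropriate neighborhood $N[v]$ of some vertex to kill all flaps through short holes while the triangle-free graph $L\setminus N[v]$ still has $\chi\ge \chi(L)-1$, since $N[v]$ induces a subgraph of bounded chromatic number (triangle-free, but more to the point we only drop by the chromatic number of a neighborhood, which triangle-freeness does \emph{not} bound — so this cannot be literally right either).

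The honest plan, then, is: (i) get $L_0\subseteq G$ minimal with $\chi(L_0)\ge c+1$, giving non-degenerate and liberal as above — but do this starting from the hypothesis $\chi(G)>2c+1$, so that \emph{before} minimizing we first use the extra factor $2$ to delete a set destroying all flaps. Concretely I would try to show: in any triangle-free $4$-sunspot-free graph, the vertices lying on some $H$-flap for some hole $H$ of length $\ge 6$ induce a subgraph that is \emph{sparse} in the sense that its removal drops $\chi$ by at most a factor close to $2$ (perhaps this set is bipartite, or union of two independent sets, after using $4$-sunspot-freeness to forbid the configurations that would create a triangle among flap-vertices); then delete it to get a flapless subgraph $G'$ with $\chi(G')> c+1$, and finally minimize $G'$ to land on $L$ with $\chi(L)=c+1$ that is non-degenerate, liberal, and — crucially — flapless, noting that the minimization deletions (low-degree vertex, dominated neighborhood) can only remove edges and hence cannot create a new hole of length $\ge 6$ with a flap. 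Verifying that last "no new long-hole-with-flap is created under these deletions" is the step I expect to be the main obstacle, together with pinning down the exact structural consequence of $4$-sunspot-freeness that controls the set of flap-vertices; the rest is the routine minimality bookkeeping sketched above.
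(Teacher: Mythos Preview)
Your minimality argument for non-degeneracy and liberality is fine, and is exactly what the paper does (Observation~2.1). The gap is entirely in the flaplessness step, and you have correctly located the difficulty: given a long hole $H$ and an $H$-flap $h_1\hbox{-}h_2\hbox{-}a\hbox{-}b\hbox{-}h_1$, the hole supplies pendants $h_\ell$ and $h_3$ for $h_1$ and $h_2$, but there is no way, from liberality and triangle-freeness alone, to manufacture a pendant for $a$ (or $b$) that misses the rest of the square. Your fallback plan --- delete all ``flap-vertices'' first, hoping this set has chromatic number at most roughly $c$ --- is not how the paper spends the factor~$2$, and I do not see how to make it work; moreover, your last claim, that deleting a low-degree or dominated vertex cannot create a new long hole with a flap, is false (deleting vertices can shortcut a path and create new induced cycles).

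The key idea you are missing is a \emph{BFS leveling}. The paper uses $\chi(G)>2c+1$ as follows: fix a root, peel off one closed neighborhood (this costs the ``$+1$'', since the neighborhood is stable in a triangle-free graph), and take distance classes $M_1,M_2,\ldots$ from a neighbor of the root; since $\chi\le 2\max_r \chi(M_r)$, some level $M_{r_1}$ with $r_1\ge 2$ has $\chi(M_{r_1})>c$. Minimize \emph{inside that level} to get $L=L_r$ with $\chi(L)=c+1$, non-degenerate and liberal; this is now the bottom of an $r$-leveling $(L_0,\ldots,L_r)$ with $r\ge 3$. The point of the leveling is precisely to supply the missing pendants: every vertex of $L_r$ has a parent in $L_{r-1}$, a grandparent in $L_{r-2}$, and so on, and these ancestors are anticomplete to everything in $L_r$ except through the leveling edges. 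The paper then proves three short lemmas (no short sector from a vertex of $L_{r-1}$; no short sector from a vertex of $L_r\setminus H$; any $H$-flap must have both off-hole vertices in $L_r\setminus H$), each time building the forbidden $4$-sunspot using one or more of these ancestor vertices as the third pendant. Flaplessness of $L_r$ then follows from one more such argument. So the factor $2$ is spent on BFS layers, not on deleting a bad set, and the ancestors in earlier layers are exactly the extra structure you were unable to find inside $L$ itself.
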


The proof is in several steps, starting with an observation (we omit the proof):

\begin{observation}\label{obs:critical}
Let $c\in \poi$ and let $G$ be a graph such that $\chi(G)>c$ and $\chi(G')\leq c$ for every induced subgraph $G'$ of $G$ other than $G$ itself. Then $\chi(G)=c+1$, and $G$ is both non-degenerate and liberal.
\end{observation}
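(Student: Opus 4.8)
The plan is to treat this as the standard ``vertex-criticality'' fact: here ``$V(G)$ minimal such that $\chi(G)>c$'' should be read as $\chi(G)>c$ while $\chi(G-v)\le c$ for every $v\in V(G)$ (equivalently, no proper induced subgraph of $G$ has chromatic number exceeding $c$). I would verify the three conclusions in turn, each by a short colour-extension argument.

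First, $\chi(G)=c+1$: fixing any $v\in V(G)$, minimality gives $\chi(G-v)\le c$, and extending a proper $c$-colouring of $G-v$ by a brand-new colour on $v$ is a proper colouring of $G$ with $c+1$ colours, so $\chi(G)\le c+1$; together with $\chi(G)>c$ this forces $\chi(G)=c+1$. Next, for non-degeneracy, I would suppose some $v$ has $\deg_G(v)<\chi(G)-1=c$; taking a proper $c$-colouring of $G-v$, the at most $c-1$ neighbours of $v$ leave some colour of $\{1,\ldots,c\}$ unused on $N_G(v)$, and assigning it to $v$ produces a proper $c$-colouring of $G$, contradicting $\chi(G)=c+1$; hence every vertex has degree at least $\chi(G)-1$, i.e.\ $G$ is non-degenerate. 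Finally, for liberality, I would suppose $x,y$ are distinct and non-adjacent with (after possibly swapping them) $N_G(x)\subseteq N_G(y)$; taking a proper $c$-colouring of $G-x$, every neighbour of $x$ lies in $N_G(y)$ and so avoids the colour of $y$, and $x\notin N_G(y)$, so colouring $x$ with the colour of $y$ gives a proper $c$-colouring of $G$, again contradicting $\chi(G)=c+1$; thus $G$ is liberal.

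I do not expect any real obstacle here --- this is the routine argument for graphs that are critical for a chromatic lower bound. The only points requiring a moment's attention are the intended reading of ``minimal'' as vertex-criticality, and, in the liberality step, keeping straight the direction of the inclusion $N_G(x)\subseteq N_G(y)$ together with the non-adjacency $x\notin N_G(y)$, which is exactly what makes recolouring $x$ with $y$'s colour legitimate.
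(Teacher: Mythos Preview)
Your proposal is correct and is exactly the standard vertex-criticality argument one would expect here; the paper itself omits the proof entirely (``the proof is easy and so we omit it''), so there is nothing to compare against. Your reading of ``$V(G)$ minimal'' as vertex-criticality is the intended one, and each of the three colour-extension steps is sound.
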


Let $r\in \poi\cup \{0\}$ and let $G$ be a graph. An \textit{$r$-leveling in $G$} is an $(r+1)$-tuple $(L_0,\ldots, L_r)$ of pairwise disjoint non-empty subsets of $V(G)$ such that:
\begin{itemize}
    \item for every $i\in \{1,\ldots, r\}$, every vertex in $L_i$ has a neighbor in $L_{i-1}$;
    \item for all distinct $i,j\in \{0,\ldots, r\}$, if there is an edge in $G$ with an end in $L_i$ and an end in $L_j$, then $|i-j|=1$.
\end{itemize}

\begin{lemma}\label{lem:deepleveling}
  Let $c\in \poi$ and let $G$ be a triangle-free graph with $\chi(G)>2c+1$. Then there is an $r$-leveling $(L_0,\ldots, L_r)$ in $G$ for some integer $r\geq 3$ such that $\chi(L_r)=c+1$ and $L_r$ is both non-degenerate and liberal.
\end{lemma}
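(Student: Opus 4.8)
The plan is to build a leveling by breadth-first search from an arbitrary vertex in a color-critical subgraph, and then extract a deep enough level that still carries large chromatic number. First I would pass to an induced subgraph $G'$ of $G$ with $V(G')$ minimal subject to $\chi(G')>2c+1$; by \Cref{obs:critical}, $\chi(G')=2c+2$ and $G'$ is non-degenerate and liberal. Fix any vertex $v\in V(G')$ and let $V_i$ be the set of vertices at distance exactly $i$ from $v$ in $G'$, for $i=0,1,2,\dots$; since $G'$ is connected (a minimal graph of chromatic number $>2c+1\geq 2$ is connected), the $V_i$ partition $V(G')$, and $(V_0,V_1,\dots)$ automatically satisfies both leveling axioms: every vertex of $V_i$ ($i\geq 1$) has a neighbor in $V_{i-1}$, and no edge joins $V_i$ to $V_j$ with $|i-j|\geq 2$. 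So any initial segment $(V_0,\dots,V_r)$ is an $r$-leveling, and more importantly each single level $V_i$ is an induced subgraph of $G'$, hence itself triangle-free.

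Next I would locate a level of large chromatic number. Since $\chi(G')=2c+2$ and $G'=\bigcup_i V_i$ with edges only between consecutive levels, the union of the even-indexed levels and the union of the odd-indexed levels each must have chromatic number at least $c+1$ (a proper coloring of each part with fewer colors could be merged into a proper coloring of $G'$ with at most $2c+1$ colors, using that there are no edges within an even–even or odd–odd pair of non-consecutive levels — wait, consecutive levels have opposite parity, so within the even union all edges vanish entirely; thus actually each of the even union and odd union is edgeless? No: $V_0\cup V_2\cup\cdots$ has no edges at all since edges only go between consecutive levels). Let me instead argue directly: $\chi(G')\le \sum_i \chi(V_i)$ is false in general, but $\chi(G')\le \chi(V_{\text{even}})+\chi(V_{\text{odd}})$ and $V_{\text{even}},V_{\text{odd}}$ are each edgeless, giving the contradiction $\chi(G')\le 2$ — which is absurd, so this BFS-from-a-vertex approach does not by itself keep chromatic number in one level. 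The fix, which I would actually carry out, is the standard one: iteratively, while some level $V_i$ has $\chi(V_i)<c+1$, we have $\chi(V_{i-1}\cup V_i\cup V_{i+1})\le \chi(V_i)+ \chi(V_{i-1}\cup V_{i+1}\text{ restricted})\le \dots$; more cleanly, one shows by induction on $r$ that if every $V_i$ has chromatic number $\le c$ then $\chi(\bigcup V_i)\le 2c$, contradicting $\chi(G')=2c+2$. Hence some level $V_m$ has $\chi(V_m)\ge c+1$. Then I would replace $G'$ by $V_m$ and recurse only if needed; in any case, take $W\subseteq V_m$ minimal with $\chi(W)>c$, so $\chi(W)=c+1$ and by \Cref{obs:critical} $W$ is non-degenerate and liberal.

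The last point is to arrange $r\ge 3$, i.e. that the chosen level sits at depth at least $3$. If $m\ge 3$ we are done by setting $L_i=V_i$ for $i<r:=m$ and $L_r=W$; the pair $(L_0,\dots,L_{r-1},L_r)$ is still a leveling since $W\subseteq V_m$ and removing vertices from the last level of a leveling preserves both axioms. If $m\le 2$, I would prepend artificial levels: since $G'$ is non-degenerate with $\chi(G')=2c+2\ge 2$, every vertex has degree $\ge 2c+1\ge 3$, so in particular $v$ has many neighbors and the early levels are non-empty; but to force depth I would instead restart the BFS from a vertex of $W$ inside a long enough induced path or simply observe that since we only need \emph{some} leveling of length $\ge 3$ ending in a set of chromatic number $c+1$, we can attach three new dummy levels before $V_0$ — however the leveling axioms require every lower level to be reachable and the edge-parity condition to hold, so dummy sets do not work. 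The clean resolution is: pick $v$ to be a vertex at distance $\ge 3$ from a vertex of $W$, which exists because $V_m$ with $m$ possibly small can be re-reached — equivalently, run BFS from any vertex of $W$ in $G'$; then $W$ sits at level $0$ of the new BFS, and instead we want it deep, so run BFS from a vertex far from $W$. Since $\chi(G')>2$, $G'$ is not bipartite-ish trivial and has diameter large enough, or if not we iterate the level-extraction inside a level to keep increasing depth while preserving $\chi\ge c+1$. The main obstacle is exactly this bookkeeping — guaranteeing simultaneously that the retained level is \emph{deep} ($r\ge 3$) and still has chromatic number $c+1$; I expect to handle it by choosing, among all levelings produced by BFS from various start vertices, one maximizing the depth of a level with chromatic number $>c$, and showing that if that depth were $\le 2$ then re-running BFS from a vertex of the critical level yields a strictly deeper such level, a contradiction.
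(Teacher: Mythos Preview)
Your overall strategy---BFS leveling, find a level of chromatic number greater than $c$, then pass to a minimal subgraph of that level to get the non-degenerate/liberal properties via \Cref{obs:critical}---matches the paper's, and your eventual argument that some $V_m$ has $\chi(V_m)>c$ (via $\chi(G')\le 2\max_i\chi(V_i)$) is correct. The genuine gap is exactly the step you flag: securing $r\ge 3$. None of your proposed fixes works. Prepending dummy levels violates the leveling axioms, as you note. The maximality/restart argument does not go through: re-running BFS from a vertex of $W$ only guarantees (using triangle-freeness, which you never actually invoke) a level of chromatic number $>c$ at depth $\ge 2$, not $\ge 3$, so no contradiction arises from assuming the maximum such depth is $\le 2$. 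Iterating the level-extraction inside $V_m$ loses chromatic number: a BFS level of a graph with $\chi=c+1$ is only guaranteed $\chi\ge\lceil(c+1)/2\rceil$, not $>c$, and the hypothesis $\chi(G)>2c+1$ leaves no slack to absorb this loss.

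The paper resolves this with a prepending trick that must be set up \emph{before} the BFS, not bolted on afterward. In a connected component $K$ with $\chi(K)>2c+1$, pick any $x_0\in K$ and let $K_1$ be a component of $K\setminus(\{x_0\}\cup N_K(x_0))$ of maximum chromatic number; since $N_K(x_0)$ is stable (first use of triangle-freeness), $\chi(K_1)>2c$. Choose $x_1\in N_K(x_0)$ with a neighbor in $K_1$ and run BFS from $x_1$ inside $K_1\cup\{x_1\}$, obtaining levels $M_1,M_2,\ldots$. Some $M_{r_1}$ has $\chi(M_{r_1})>c$, and $r_1\ge 2$ because $M_1=N_{K_1}(x_1)$ is stable (second use of triangle-freeness). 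Now $L_0=\{x_0\}$, $L_1=\{x_1\}$, $L_i=M_{i-1}$ for $2\le i\le r_1$, $L_{r_1+1}=W$ (with $W\subseteq M_{r_1}$ minimal of chromatic number $>c$) is a valid leveling because $x_0$ was made anticomplete to $K_1$ by construction, and its depth is $r_1+1\ge 3$. The missing idea in your sketch is precisely this: carve out the prefix vertex in advance so that prepending it is legal, and use triangle-freeness twice to push the high-chromatic level down.
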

\begin{proof}
 Let $K$ be a connected component of $G$ with $\chi(K)=\chi(G)> 2c+1$. Let $x_0\in K$ and let $K_1$ be a component of $G\setminus (N_K(x_0)\cup \{x_0\})$ with maximum chromatic number. Since $\chi(K)>2c+1$ and $N_K(x_0)$ is a stable set in $G$ (because $G$ is triangle-free), it follows that $\chi(K_1)> 2c$. Since $K$ is connected, it follows that there is a vertex $x_1\in N_{K}(x_0)$ such that $K_1\cup \{x_1\}$ is connected. For each $r\in \poi$, let $M_r$ be the set of all vertices of $K_1$ that are at distance exactly $r$ from $x_1$ in the graph $K_1\cup \{x_1\}$. Note that $M_1=N_{K_1}(x_1)$ (and $M_r=\varnothing$ for sufficiently large $r$). It follows that $(M_1,\ldots, M_r)$ is an $(r-1)$-leveling in $K_1$ (and so in $G$) for every $r\in \poi$. Also, since $K_1\cup \{x_1\}$ is connected, $K_1=\bigcup_{r\in \poi}M_r$ and by the second property in the definition of a leveling, $\chi(K_1)\leq 2\max_{r\in \poi}\chi(M_r)$. Note that $\chi(M_1)=1$ (because $G$ is triangle-free). We deduce that there exists $r_1\in \poi$ with $r_1\geq 2$ such that $\chi(M_{r_1})\geq \chi(K_1)/2>c\geq 1$. Let $K_2$ be an induced subgraph of $M_{r_1}$ with $V(K_2)$ minimal such that $\chi(K_2)>c$. By \Cref{obs:critical}, we have:

    \sta{\label{st:non-degenerateandliberal}$\chi(K_2)=c+1$ and $K_2$ is both non-degenerate and liberal.}

Now, for each $i\in \{0,\ldots, r_1+1\}$, define $L_i$ as follows: let $L_0=\{x_0\}$, let $L_1=\{x_1\}$, $L_{i}=M_{i-1}$ for every $i\in \{2,\ldots, r_1\}$, and let $L_{r_1+1}=K_2\subseteq M_{r_1}$.
Then $(L_0,\ldots,L_{r_1+1})$ is an $(r_1+1)$-leveling; in particular, we have $r_1+1\geq 3$ since $r_1\geq 2$. Furthermore, by \eqref{st:non-degenerateandliberal}, we have $\chi(L_{r_1+1})=\chi(K_2)=c+1$, and $L_{r_1+1}=K_2$ is both non-degenerate and liberal. This completes the proof of \Cref{lem:deepleveling}.
\end{proof}

Let $G$ be a graph. A \textit{path in $G$} is an induced subgraph $P$ of $G$ that is a path. We write $P=p_1\dd \cdots\dd p_{\ell}$, for $\ell\in \poi$, to mean $V(P)=\{p_1,\ldots, p_{\ell}\}$ and $E(P)=\{p_ip_{i+1}:i\in \{1,\ldots, \ell-1\}\}$. We call $p_1,p_{\ell}$ the \textit{ends} of $P$, and the set $P\setminus \{p_1,p_{\ell}\}$ the \textit{interior} of $P$. The \textit{length} of $P$ is the number of edges of $P$. Given a hole $H$ in $G$ and a vertex $x\in G\setminus H$, an \textit{$x$-sector in $H$} is a path $P$ of non-zero length in $H$ such that $x$ is adjacent in $G$ to the ends of $P$ and $x$ is not adjacent in $G$ to any vertex in the interior of $P$.
\medskip

We need the next three lemmas:

\begin{lemma}\label{lem:fromtop}
    Let $r\geq 2$ be an integer, let $G$ be a triangle-free $4$-sunspot-free graph, and let $(L_0,\ldots, L_r)$ be an $r$-leveling in $G$. Let $H$ be a hole of length at least $6$ in $L_r$ and let $x\in L_{r-1}$. Then there is no $x$-sector of length at most $2$ in $H$.
\end{lemma}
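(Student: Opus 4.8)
The plan is a proof by contradiction. Suppose towards a contradiction that $P$ is an $x$-sector in $H$ of length at most $2$. If $P=p_1\dd p_2$ has length $1$, then $p_1p_2\in E(H)$ while $x$ is adjacent to both $p_1$ and $p_2$, so $\{x,p_1,p_2\}$ is a triangle, contradicting that $G$ is triangle-free. Hence $P$ has length exactly $2$, say $P=p_1\dd p_2\dd p_3$. Write $H=h_1\dd\cdots\dd h_\ell\dd h_1$ with $\ell\geq 6$; after a cyclic shift and possibly a reflection of $H$ we may assume $p_i=h_i$ for each $i\in\{1,2,3\}$. Put $a=h_\ell$ and $b=h_4$, that is, the neighbors in $H$ of the endpoints $h_1$ and $h_3$ of $P$ other than $h_2$.

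Next I would collect the adjacencies needed to exhibit a $4$-sunspot. By definition of an $x$-sector, $x$ is adjacent to $h_1$ and $h_3$ and non-adjacent to $h_2$. Since $h_1h_\ell,h_3h_4\in E(H)$ and $G$ is triangle-free, $x$ is non-adjacent to both $a$ and $b$ (otherwise $\{x,h_1,a\}$ or $\{x,h_3,b\}$ would be a triangle). Because $\ell\geq 6$, the vertices $h_1,h_2,h_3,h_4,h_\ell$ are pairwise distinct and the only edges of $G$ among them are $h_1h_2$, $h_2h_3$, $h_3h_4$, and $h_\ell h_1$; in particular $a$ and $b$ are non-adjacent, $a$ is non-adjacent to $h_2$ and $h_3$, and $b$ is non-adjacent to $h_1$ and $h_2$. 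Now I bring in the leveling: since $r\geq 2$, every vertex of $L_{r-1}$ has a neighbor in $L_{r-2}$, so we may pick $w\in L_{r-2}$ adjacent to $x$. As the edges of an $r$-leveling run only between consecutive levels and $\{h_1,h_2,h_3,a,b\}\subseteq H\subseteq L_r$, the vertex $w$ has no neighbor in this set, and $w$ is distinct from each of $x,h_1,h_2,h_3,a,b$ because the levels are pairwise disjoint.

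Finally I would verify that $(h_1,x,h_3,h_2\,;\,a,w,b)$ is a $4$-sunspot in $G$: the four-cycle is $h_1\dd x\dd h_3\dd h_2\dd h_1$, whose two non-edges $h_1h_3$ and $xh_2$ were checked above, and $a,w,b$ are pendant vertices at $h_1$, $x$, $h_3$ respectively, with $h_2$ the corner carrying no pendant. Checking that the induced subgraph on these seven (pairwise distinct) vertices has exactly the edge set $\{h_1x,\,xh_3,\,h_3h_2,\,h_2h_1,\,h_1a,\,xw,\,h_3b\}$ is a routine verification against the adjacency facts of the previous paragraph. This contradicts the hypothesis that $G$ is $4$-sunspot-free, completing the proof. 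The only mildly delicate points are the bookkeeping in this last check and the observation that $\ell\geq 6$ (rather than merely $\ell\geq 4$) is genuinely needed: it is used precisely to guarantee $a\not\sim b$ and $a,b\notin\{h_1,h_2,h_3\}$, which is exactly what forces the seven vertices to induce a $4$-sunspot and not something denser. It is also worth noting why we choose $h_2$, the middle vertex of the sector, to be the pendant-free corner of the $4$-sunspot: this is what lets us avoid having to produce a pendant at $h_2$, for which no obvious candidate is available, and instead only needs pendants at $h_1$, $h_3$ (supplied by $H$ itself) and at $x$ (supplied by the level $L_{r-2}$).
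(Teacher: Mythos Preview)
Your proof is correct and follows essentially the same approach as the paper's own proof: rule out length-$1$ sectors by triangle-freeness, then for a length-$2$ sector build a $4$-sunspot on the four-cycle through $x$ and the sector endpoints, using the two outer hole-neighbors and a neighbor of $x$ in $L_{r-2}$ as the three pendants. The only difference is notational (the paper centers the sector at $h_2\dd h_3\dd h_4$ inside a path $h_1\dd\cdots\dd h_5$, whereas you use $h_1\dd h_2\dd h_3$ with $a=h_\ell$ and $b=h_4$), and your write-up is somewhat more explicit in verifying the non-edges.
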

\begin{proof}
There is no $x$-sector of length $1$ in $H$ because $G$ is triangle-free. Assume that there is an $x$-sector of length $2$ in $H$. Since $H$ has length at least $6$ and since $G$ is triangle-free, there is a path $P=h_1\dd h_2\dd h_3\dd h_4\dd h_5$ of length $4$ in $H$ such that $N_P(x)=\{h_2,h_4\}$. Since $r\geq 2$, it follows that $x$ has a neighbor $y\in L_{r-2}$. But now $(h_2,x,h_4,h_3; h_1,y,h_5)$ is a $4$-sunspot in $G$, a contradiction. This completes the proof of \Cref{lem:fromtop}.
\end{proof}

\begin{lemma}\label{lem:frombottom}
    Let $r\geq 3$ be an integer, let $G$ be a triangle-free $4$-sunspot-free graph, and let $(L_0,\ldots, L_r)$ be an $r$-leveling in $G$ such that $L_r$ is liberal. Let $H$ be a hole of length at least $6$ in $L_r$ and let $x\in L_{r}\setminus H$. Then there is no $x$-sector of length at most $2$ in $H$.
\end{lemma}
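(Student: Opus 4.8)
The plan is to argue by contradiction; assume some $x$-sector of length at most $2$ exists in $H$. Since $G$ is triangle-free there is none of length $1$, so there is one of length $2$, and since $H$ has length at least $6$ this yields an induced path $P=h_1\dd h_2\dd h_3\dd h_4\dd h_5$ contained in $H$ with $N_P(x)=\{h_2,h_4\}$; write $H=h_1\dd h_2\dd\cdots\dd h_k\dd h_1$ with $k\geq 6$. The engine of the proof is the reduction: \emph{if some $u\in V(G)$ is adjacent to $x$ and non-adjacent to each of $h_1,h_3,h_5$, then $(h_2,x,h_4,h_3;h_1,u,h_5)$ is a $4$-sunspot in $G$}, a contradiction. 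Indeed $u\notin\{h_1,\dots,h_5,x\}$, and $u\not\sim h_2,h_4$ because $G$ is triangle-free with $u\sim x\sim h_2,h_4$, while the remaining non-adjacencies hold because $P$ is an induced subpath of a hole of length at least $6$. So the task is to show that $x$ has no neighbor avoiding $\{h_1,h_3,h_5\}$.

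Next I would bring in the leveling. Fix a neighbor $y$ of $x$ in $L_{r-1}$; since $y\notin V(H)$ we have $y\neq h_1,h_3,h_5$, so $y$ is adjacent to at least one of $h_1,h_3,h_5$, while $y\not\sim h_2,h_4$ by triangle-freeness. As $r\geq 3$, \Cref{lem:fromtop} applies to $y\in L_{r-1}$ and $H\subseteq L_r$, so $N_H(y)$ spans no sector of length at most $2$ in $H$; since $\{h_1,h_3\}$ and $\{h_3,h_5\}$ lie at distance $2$ on $H$ with midpoints $h_2,h_4\notin N(y)$, this rules out $\{h_1,h_3\}\subseteq N_H(y)$ and $\{h_3,h_5\}\subseteq N_H(y)$, and if $k=6$ it also rules out $\{h_1,h_5\}\subseteq N_H(y)$ (then $y\not\sim h_6$, so $h_1\dd h_6\dd h_5$ would be a sector). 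The surviving cases are (I) $y\sim h_3$, $y\not\sim h_1,h_5$; (II) $y$ adjacent to exactly one of $h_1,h_5$ and not $h_3$; (III) $y\sim h_1,h_5$, $y\not\sim h_3$, $k\geq 7$.

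Cases (II) and (III) I expect to be routine. Say $y\sim h_1$: consider the $4$-hole $y\dd h_1\dd h_2\dd x\dd y$. A neighbor $w$ of $y$ in $L_{r-2}$ (which exists since $r-1\geq 2$) is non-adjacent to all of $L_r$, hence a pendant at $y$; $h_3$ is a pendant at $h_2$; and $h_k$ is a pendant at $h_1$ as soon as $h_k\not\sim x$, noting that $h_k\not\sim y$ automatically since $y\sim h_1\sim h_k$. So either $(y,h_1,h_2,x;w,h_k,h_3)$ is a $4$-sunspot, or $x\sim h_k$, and then $h_k\dd h_1\dd h_2$ is a length-$2$ $x$-sector with midpoint $h_1$, so re-entering the argument with this sector lands in case (I); case (III) is handled the same way. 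Thus everything funnels into case (I), which I expect to be the main obstacle.

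In case (I) the set $\{x,h_3,y,h_2,h_4\}$ induces a $K_{2,3}$ with parts $\{x,h_3\}$ and $\{y,h_2,h_4\}$, and the naive pendant choices no longer produce a sunspot, so here I would invoke liberality of $L_r$: as $x$ and $h_3$ are distinct non-adjacent vertices of $L_r$, there is $z\in L_r$ with $z\sim x$ and $z\not\sim h_3$; then $z\not\sim h_2,h_4$ and $z\notin\{h_1,\dots,h_5,x\}$, so by the reduction above $z\sim h_1$ or $z\sim h_5$, say $z\sim h_1$. Now $z\dd h_1\dd h_2\dd x\dd z$ is a $4$-hole, and I would try to grow it to a $4$-sunspot using $h_3$ as a pendant at $h_2$, a pendant at $h_1$ taken from $h_k$ or from $N_{L_{r-1}}(h_1)$, and a pendant at $z$ taken from $N_{L_{r-1}}(z)$, with triangle-freeness eliminating some obstructing edges for free (e.g.\ no $L_{r-1}$-neighbor of $h_1$ can be adjacent to $z$, else a triangle with $h_1$) and \Cref{lem:fromtop}, applied to these $L_{r-1}$-vertices, eliminating more (they can have no length-$2$ sector in $H$). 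Any residual bad adjacency -- such as $x\sim h_k$, or an $L_{r-1}$-neighbor of $h_1$ adjacent to $x$ -- produces another vertex of $L_r$ with a length-$2$ sector in $H$ (or an $H$-flap through $h_1h_2$), which I would aim to rule out via an extremal choice of the counterexample $(H,x,P)$ fixed at the outset (e.g.\ minimizing $|V(H)|$, then $|N_H(x)|$). The delicate point, and where I expect the real work to go, is exactly case (I): unlike (II)--(III), triangle-freeness alone does not eliminate the relevant bad edges, so liberality of $L_r$, repeated use of \Cref{lem:fromtop}, and the extremal choice must act in concert, and choosing that extremal parameter so that every escape from the direct $4$-sunspot argument is strictly smaller is the crux of the lemma.
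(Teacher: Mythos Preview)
Your engine and the case split are sound, and you correctly funnel everything into case~(I): a vertex $y\in L_{r-1}$ with $y\sim x$, $y\sim h_3$, and $y\not\sim h_1,h_2,h_4,h_5$. But your treatment of case~(I) has a genuine gap. Building a sunspot on the $4$-hole $z\dd h_1\dd h_2\dd x\dd z$ with pendants taken from $N_{L_{r-1}}(h_1)$ and $N_{L_{r-1}}(z)$ runs into obstructions you cannot clear: for instance, a vertex $p\in N_{L_{r-1}}(h_1)$ may be adjacent to $x$, and this produces neither a shorter hole nor a vertex of $L_r$ with a new length-$2$ sector in $H$, so your proposed extremal invariant $(|V(H)|,|N_H(x)|)$ does not decrease. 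Appealing to ``an $H$-flap through $h_1h_2$'' does not help either, since flaplessness is \emph{not} among the hypotheses of this lemma --- it is a consequence established only later, using this lemma.

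The paper's route is different in two essential ways. First, it chooses $y\in N_{L_{r-1}}(h_3)$ rather than $N_{L_{r-1}}(x)$; then \Cref{lem:fromtop} gives $N_P(y)=\{h_3\}$ immediately, and the would-be sunspot $(h_2,h_3,h_4,x;h_1,y,h_5)$ forces $xy\in E(G)$ --- arriving at your case~(I) with no case split. Second, and this is the idea you are missing, the paper puts the $L_{r-1}$-vertex $y$ \emph{on the $4$-cycle} rather than using it as a pendant: with $z\in N_{L_{r-2}}(y)$ and $w\in N_{L_r}(x)\setminus N_{L_r}(h_3)$ from liberality, the two candidate sunspots $(h_2,x,y,h_3;h_1,w,z)$ and $(h_4,x,y,h_3;h_5,w,z)$ force $w\sim h_1$ and $w\sim h_5$, yielding a new $6$-hole $H'=w\dd h_1\dd h_2\dd h_3\dd h_4\dd h_5\dd w$ inside $L_r$. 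The proof then finishes by applying \Cref{lem:fromtop} to $H'$ and descending once more, ultimately using a vertex of $L_{r-3}$; this is exactly where the hypothesis $r\geq 3$ is spent, and it is a depth your sketch never reaches.
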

\begin{proof}
There is no $x$-sector of length $1$ in $H$ because $G$ is triangle-free. Assume that there is an $x$-sector of length $2$ in $H$. Since $H$ has length at least $6$ and since $G$ is triangle-free, there is a path $P=h_1\dd h_2\dd h_3\dd h_4\dd h_5$ of length $4$ in $H$ such that $N_P(x)=\{h_2,h_4\}$. Since $r\geq 3$, it follows that $h_3$ has a neighbor $y\in L_{r-1}$ and $y$ has a neighbor $z\in L_{r-2}$. By \Cref{lem:fromtop}, there is no $y$-sector of length at most $2$ in $H$, and so $N_P(y)=\{h_3\}$. Since $(h_2,h_3,h_4,x; h_1,y,h_5)$ is not a $4$-sunspot in $G$, it follows that $xy\in E(G)$. Since $L_r$ is liberal, there is a vertex $w\in N_{L_r}(x)\setminus N_{L_r}(h_3)$. In particular, we have $w\in L_r\setminus P$. Since $G$ is triangle-free, it follows that $yw\notin E(G)$, and $N_P(w)\subseteq \{h_1,h_5\}$. Also, if $h_1w\notin E(G)$, then $(h_2,x,y,h_3;h_1,w,z)$ is a $4$-sunspot in $G$, and if $h_5w\notin E(G)$, then $(h_4,x,y,h_3;h_5,w,z)$ is a $4$-sunspot in $G$, a contradiction. We deduce that $N_P(w)= \{h_1,h_5\}$, and so $H'=w\dd h_1\dd h_2\dd h_3\dd h_4\dd h_5\dd w$ is a hole of length $6$ in $L_r$. Since $r\geq 3$, it follows that $h_1$ has a neighbor $y_1\in L_{r-1}$, $y_1$ has a neighbor $y_2\in L_{r-2}$, and $y_2$ has a neighbor $y_3\in L_{r-3}$. By \Cref{lem:fromtop} applied to $H'$ and $y_1$, we have $h_1\in N_{H'}(y_1)\subseteq \{h_1,h_4\}$. Consequently, since $(h_2,h_1,w,x;h_3,y_1,h_5)$ is not a $4$-sunspot in $G$, it follows that $xy_1\in E(G)$, and so $yy_1\notin E(G)$ (because $G$ is triangle-free). Now, if $yy_2\in E(G)$, then $(y_1,y_2,y,x;h_1,y_3,h_3)$ is a $4$-sunspot in $G$, a contradiction. It follows that $yy_2\notin E(G)$. But then $(w,x,y_1,h_1;h_5,y,y_2)$ is a $4$-sunspot in $G$, again a contradiction. This completes the proof of \Cref{lem:frombottom}.
\end{proof}

\begin{lemma}\label{lem:topflap}
    Let $r\geq 3$ be an integer, let $G$ be a triangle-free $4$-sunspot-free graph, and let $(L_0,\ldots, L_r)$ be an $r$-leveling in $G$ such that $L_r$ is liberal. Let $H$ be a hole of length at least $6$ in $L_r$ and let $x_1\dd h_1\dd h_2\dd x_2\dd x_1$ be an $H$-flap in $G$ where $h_1,h_2\in H$ and $x_1,x_2\in L_{r-1}\cup L_r$. Then
$x_1,x_2\in L_{r}\setminus H$. 
\end{lemma}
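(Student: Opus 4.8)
The plan is to derive the conclusion in two stages: first that $x_1,x_2\notin H$, and then that $x_1,x_2\in L_r$. Write $C=x_1\dd h_1\dd h_2\dd x_2\dd x_1$ for the given $H$-flap. Since $C$ is an induced $4$-hole we have $x_1x_2\in E(G)$ while $x_1h_2,h_1x_2\notin E(G)$. Since $H$ is a hole and $h_1,h_2\in H$ with $h_1h_2\in E(G)$, the edge $h_1h_2$ lies on $H$; let $h_0$ and $h_3$ be the $H$-neighbours of $h_1$ and $h_2$ respectively other than each other, so (using $|H|\geq 6$) the path $h_0\dd h_1\dd h_2\dd h_3$ is induced in $H$ and $h_0h_2,h_1h_3,h_0h_3\notin E(G)$. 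Note that the hypotheses are symmetric under the exchange $(x_1,h_1)\leftrightarrow(x_2,h_2)$, so in each stage it will suffice to argue for $x_1$.

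For the first stage, I would show $x_1\notin H$. Suppose not. Since $x_1h_1\in E(G)$ and $H$ is induced, $x_1$ is an $H$-neighbour of $h_1$, and as $x_1\neq h_2$ this forces $x_1=h_0$. If also $x_2\in H$, the same reasoning gives $x_2=h_3$, contradicting $x_1x_2\in E(G)$ since $h_0h_3\notin E(G)$; hence $x_2\in(L_{r-1}\cup L_r)\setminus H$. But now $x_2$ is adjacent to $h_0$ and $h_2$ and non-adjacent to $h_1$, so $h_0\dd h_1\dd h_2$ is an $x_2$-sector of length $2$ in $H$ — contradicting \Cref{lem:fromtop} if $x_2\in L_{r-1}$, and \Cref{lem:frombottom} if $x_2\in L_r\setminus H$ (here using $r\geq 3$ and that $L_r$ is liberal). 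So $x_1\notin H$, and by symmetry $x_2\notin H$; thus $x_1,x_2\in(L_{r-1}\cup L_r)\setminus H$.

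For the second stage, suppose for contradiction that $x_1\in L_{r-1}$ (by the symmetry above, this is the only configuration still to exclude). Since $r\geq 3$, the leveling gives a neighbour $z\in L_{r-2}$ of $x_1$, and I would show that $(x_1,h_1,h_2,x_2;z,h_0,h_3)$ is a $4$-sunspot in $G$, which is the desired contradiction. The cycle and pendant edges are present by construction. For the non-edges: $z$ is non-adjacent to $h_0,h_1,h_2,h_3$ because there is no edge between $L_{r-2}$ and $L_r$, and $h_0h_2,h_1h_3,h_0h_3\notin E(G)$ as recorded above. It remains to rule out adjacencies of $z,h_0,h_3$ to $x_2$ and of $h_0,h_3$ to $x_1$. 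Now $h_0x_1\in E(G)$ would create the triangle $h_0h_1x_1$ and $h_3x_2\in E(G)$ the triangle $h_3h_2x_2$; $h_3x_1\in E(G)$ would make $h_1\dd h_2\dd h_3$ an $x_1$-sector of length $2$ in $H$, contradicting \Cref{lem:fromtop} (as $x_1\in L_{r-1}$); and $h_0x_2\in E(G)$ would make $h_0\dd h_1\dd h_2$ an $x_2$-sector of length $2$, contradicting \Cref{lem:fromtop} or \Cref{lem:frombottom} according as $x_2\in L_{r-1}$ or $x_2\in L_r\setminus H$. Finally $zx_2\notin E(G)$: if $x_2\in L_{r-1}$ this is because $zx_1x_2$ would be a triangle (recall $x_1x_2\in E(G)$), and if $x_2\in L_r\setminus H$ it is because there is no edge between $L_{r-2}$ and $L_r$. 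This produces the $4$-sunspot and hence the contradiction, so $x_1\in L_r\setminus H$; by symmetry $x_2\in L_r\setminus H$, completing the proof.

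The main obstacle I anticipate is the bookkeeping in the second stage: one must check that the seven vertices $x_1,h_1,h_2,x_2,z,h_0,h_3$ are pairwise distinct (which follows from $x_1,x_2\notin H$, from $z\in L_{r-2}$ lying below all the others, and from $|H|\geq 6$) and that every potentially missing non-adjacency among them is killed by exactly one of triangle-freeness, the leveling structure, \Cref{lem:fromtop}, or \Cref{lem:frombottom}. Choosing the pendant $z$ two levels down (in $L_{r-2}$) rather than one level down is what makes most of these non-adjacencies automatic, so that the only genuine case split is on whether $x_2$ lies in $L_{r-1}$ or in $L_r\setminus H$.
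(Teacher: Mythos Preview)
Your proof is correct and follows essentially the same route as the paper's: you reduce to the case $x_1,x_2\notin H$ via a short $x$-sector argument using \Cref{lem:fromtop}/\Cref{lem:frombottom}, and then exhibit exactly the same $4$-sunspot $(x_1,h_1,h_2,x_2;z,h_0,h_3)$ with $z\in L_{r-2}$ a neighbour of $x_1$. The only difference is organizational — you first exclude $x_1,x_2\in H$ outright (handling the case $x_1,x_2\in H$ explicitly via $h_0h_3\notin E(G)$) and then treat the level of $x_1$, whereas the paper splits first on whether both $x_1,x_2$ lie in $L_r$ — but the content is identical.
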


\begin{proof}
Suppose not. Assume that $x_1,x_2\in L_r$. By symmetry, we may assume that $x_1\in L_r\setminus H$ and $x_2\in H$. But now $h_1\dd h_2\dd x_2$ is an $x_1$-sector of length $2$ in $H$, contrary to \Cref{lem:frombottom}. Therefore, one of $x_1,x_2$ belongs to $L_{r-1}$; say $x_1\in L_{r-1}$. Let $h_0$ be the unique neighbor of $h_1$ in $H$ other than $h_2$ and let $h_3$ be the unique neighbor of $h_2$ in $H$ other than $h_1$. Since $H$ has length at least $6$, it follows that $P=h_0\dd h_1\dd h_2\dd h_3$ is a path of length $3$ in $H$. By \Cref{lem:fromtop} applied to $H, x_1$, we have $N_P(x_1)=\{h_1\}$; in particular, $x_2\neq h_3$ and so $x_2\notin H$. By \Cref{lem:fromtop} and \Cref{lem:frombottom} applied to $H, x_2$ (depending on whether $x_2\in L_{r-1}$ or $x_2\in L_r\setminus H$), we have $N_P(x_2)=\{h_2\}$. Also, since $r\geq 2$, $x_1$ has a neighbor $y\in L_{r-2}$, and since $G$ is triangle-free, $x_2y\notin E(G)$. But now $(x_1,h_1,h_2,x_2;y,h_0,h_3)$ is a $4$-sunspot in $G$, a contradiction. This completes the proof of \Cref{lem:topflap}.
\end{proof}

Let us now prove \Cref{thm:flap}:

\begin{proof}[Proof of \Cref{thm:flap}]
By \Cref{lem:deepleveling}, there is an $r$-leveling $(L_0,\ldots, L_r)$ in $G$ for some integer $r\geq 3$ such that $\chi(L_r)=c+1$ and $L_r$ is both non-degenerate and liberal. We claim that $L=L_r$ is the desired induced subgraph of $G$. To see this, it suffices to show that $L_r$ is flapless. Suppose for a contradiction that there is a hole $H$ in $L_r$ of length at least $6$ for which there is an $H$-flap in $L_r$. By the definition, this means there is a $4$-hole $x_1\dd h_1\dd h_2\dd x_2\dd x_1$ in $L_r$ such that $h_1,h_2\in H$.

By \Cref{lem:topflap}, we have $x_1,x_2\in L_r\setminus H$. Let $h_0$ be the unique neighbor of $h_1$ in $H$ other than $h_2$ and let $h_3$ be the unique neighbor of $h_2$ in $H$ other than $h_1$. Since $H$ has length at least $6$, it follows that $P=h_0\dd h_1\dd h_2\dd h_3$ is a path of length $3$ in $H$. By \Cref{lem:frombottom} applied to $H, x_1$ and $x_2$, we have $N_P(x_i)=\{h_i\}$ for $i\in \{1,2\}$. Since $r\geq 3$, it follows that $h_1$ has a neighbor $y_1\in L_{r-1}$ and $x_2$ has a neighbor $y_2\in L_{r-1}$. 

By \Cref{lem:fromtop} applied to $H, y_1$, we have $N_{P}(y_1)=\{h_1\}$. Since $G$ is triangle-free, it follows that $x_1y_1\notin E(G)$. Also, by \Cref{lem:topflap}, $y_1\dd h_1\dd h_2\dd x_2\dd y_1$ is not an $H$-flap in $G$. Consequently, we have $x_2y_1\notin E(G)$; in particular $y_1\neq y_2$. We deduce that $h_1$ is the only neighbor of $y_1$ in $P\cup \{x_1,x_2\}$. Since $G$ is triangle-free, it follows that $x_1y_2,h_2y_2\notin E(G)$. Also, by \Cref{lem:topflap}, neither $y_2\dd h_1\dd h_2\dd x_2\dd y_2$ nor $x_2\dd h_2\dd h_3\dd y_2\dd x_2$ is an $H$-flap in $G$. Thus, we have $h_1y_2,h_3y_2\notin E(G)$. We deduce that $x_2$ is the only neighbor of $y_2$ in $\{h_1,h_2,h_3,x_1,x_2\}$. On the other hand, by \Cref{lem:topflap}, $y_2\dd h_0\dd h_1\dd y_1\dd y_2$ is not an $H$-flap in $G$, and so there is a vertex $z\in \{h_0,y_1\}$ for which $y_2z\notin E(G)$. But now $(h_1,h_2,x_2,x_1;z,h_3,y_2)$ is a $4$-sunspot in $G$, a contradiction. This completes the proof of \Cref{thm:flap}.
\end{proof}

\section{Flares}\label{sec:flairs}

Let $G$ be a graph. We say that $X,Y\subseteq V(G)$ are \textit{anticomplete in $G$} if $X\cap Y=\varnothing$ and there is no edge in $G$ with an end in $X$ and an end in $Y$. If $X=\{x\}$, then we say that $x$ is \textit{anticomplete to $Y$ in $G$} to mean $X$ and $Y$ are anticomplete in $G$. 

Given a hole $H$ in $G$, an \textit{$H$-flare in $G$} is a map $\Phi:H\rightarrow 2^{G\setminus H}$ such that for every $h\in H$, we have $\Phi(h)\subseteq N_G(h)\setminus H$ with $|\Phi(h)|\leq 1$. In this case, we write $\Phi[h]=\Phi(h)\cup \{h\}$. The \textit{order} of $\Phi$ is the number of vertices $h\in H$ with $|\Phi(h)|=1$, and we say that $\Phi$ is \textit{full} if the order of $\Phi$ is $|H|$. For $d\in \poi$, we say that $\Phi$ is \textit{$d$-safe} if for all vertices $h,h'\in H$ that are at distance at most $d$ in $H$, the sets $\Phi(h)$ and $\Phi[h']$ are anticomplete in $G$.

In this section, we show that:

\begin{theorem}\label{thm:flare}
    Let $d\in \poi$ and let $G$ be a triangle-free $4$-sunspot-free graph of minimum degree at least $4d-1$ that is both liberal and flapless. Let $H$ be a hole of length at least $6$ in $G$. Then there is a full $d$-safe $H$-flare in $G$.
\end{theorem}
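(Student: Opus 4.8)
The plan is to build the $H$-flare by selecting, for each vertex $h$ of $H$, a single private neighbour $a_h\in N_G(h)\setminus H$, making the selections one at a time as we traverse $H$ once. Write $H=h_1\dd\cdots\dd h_\ell\dd h_1$ with $\ell\geq 6$, read indices modulo $\ell$, and set $A_i=N_G(h_i)\setminus H$; since $H$ is a hole, $|A_i|=\deg_G(h_i)-2\geq 4d-5$. A full $d$-safe $H$-flare is then precisely a choice of $a_i\in A_i$ $(i\in\{1,\dots,\ell\})$ with the property that $a_i\neq a_j$, $a_i\not\sim h_j$ and $a_i\not\sim a_j$ whenever $\mathrm{dist}_H(h_i,h_j)\leq d$.

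First I would record the constraints on the $A_i$ coming from triangle-freeness and flaplessness. Triangle-freeness gives $A_i\cap A_{i+1}=\varnothing$, and also $N_G(a)\cap N_G(h)=\varnothing$ whenever $a\sim h$. For a $v\in A_i\cap A_{i+2}$, the set $\{v,h_i,h_{i+1},h_{i+2}\}$ induces a $4$-hole sharing the edge $h_ih_{i+1}$ with $H$ --- an $H$-flap, which $G$ does not have; and for adjacent $v\in A_i$, $w\in A_{i+1}$, the set $\{v,h_i,h_{i+1},w\}$ induces such an $H$-flap. Hence $A_i$ is disjoint from each of $A_{i-2},A_{i-1},A_{i+1},A_{i+2}$ and anticomplete to $A_{i-1}\cup A_{i+1}$. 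In particular, for any choice of the $a_i$ the $d$-safeness conditions for hole-vertices at distance $1$ or $2$ come for free, and we are left to control pairs at distance in $\{3,\dots,d\}$ (for ``$a_i\neq a_j$'' and ``$a_i\not\sim h_j$'') and pairs at distance in $\{2,\dots,d\}$ (for ``$a_i\not\sim a_j$'').

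Now I would run the greedy: process $h_1,\dots,h_\ell$ in order and, having fixed $a_1,\dots,a_{i-1}$, choose $a_i\in A_i$ avoiding
\[
B_i\;=\;\bigcup_{3\leq\mathrm{dist}_H(h_i,h_j)\leq d}A_j\ \cup\ \bigcup_{\substack{j<i\\ 2\leq\mathrm{dist}_H(h_i,h_j)\leq d}}\bigl(\{a_j\}\cup N_G(a_j)\bigr).
\]
It is straightforward to check that if this never fails then the map $h\mapsto\{a_h\}$ is a full $d$-safe $H$-flare (for distances $1$ and $2$ use the previous paragraph; for distances in $\{3,\dots,d\}$ the requirement $a_i\notin A_j$ yields $a_i\not\sim h_j$ and $a_i\neq a_j$; for distances in $\{2,\dots,d\}$ the later-chosen of $a_i,a_j$ was kept out of $N_G$ of the earlier). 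Since $a_j\in A_j$ lies outside $A_i$ (by disjointness when $\mathrm{dist}_H(h_i,h_j)\leq 2$, and by the rule defining step $j$ otherwise), $B_i\cap A_i$ is contained in the union of the at most $2(d-2)$ sets $A_j\cap A_i$ with $\mathrm{dist}_H(h_i,h_j)\in\{3,\dots,d\}$ together with the at most $2(d-1)$ sets $N_G(a_j)\cap A_i$ with $\mathrm{dist}_H(h_i,h_j)\in\{2,\dots,d\}$; so it suffices to prove that each of these sets has size at most $1$, which gives $|B_i\cap A_i|\leq 2(d-2)+2(d-1)=4d-6<4d-5\leq|A_i|$, so the greedy never stalls.

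Both size bounds reduce to producing a $4$-sunspot, and this is where I expect the real work --- and the main obstacle --- to lie. Each bound says there are no non-adjacent $p\neq p'$ with two non-adjacent common neighbours $q_1\neq q_2$ such that $p,p'\in A_i$ and $\{q_1,q_2\}$ is either $\{h_i,h_j\}$ (first bound, $p,p'\in A_i\cap A_j$) or $\{a_j,h_i\}$ (second bound, $p,p'\in N_G(a_j)\cap A_i$; here $p,p'\notin A_j$ because $a_j\sim h_j$ and $G$ is triangle-free, so $p,p'\not\sim h_j$). Given such a configuration, $\{q_1,p,q_2,p'\}$ induces a $4$-hole, and one extends it to a $4$-sunspot $(q_1,p,q_2,p';w_1,u,w_2)$, where $u\in N_G(p)\setminus N_G(p')$ is a liberality witness (automatically non-adjacent to $q_1,q_2,p'$), $w_1$ is a pendant at $q_1$, and $w_2$ a pendant at $q_2$. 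Every required non-adjacency between a pendant and a cycle-vertex holds automatically (triangle-freeness, plus the disjointness/anticompleteness facts above), so the whole difficulty is to pick $w_1,w_2$ --- using the hole-neighbours of the $h$'s among $q_1,q_2$, the vertex $h_j$ when $a_j$ is among them, and the surplus guaranteed by the degree hypothesis $4d-3$ --- so that $w_1,u,w_2$ are pairwise distinct and pairwise non-adjacent. When $|H|$ is small (down to $6$) the hole-neighbours are cramped and one must argue case by case, repeatedly invoking flaplessness to exclude the configurations in which no good pendant choice exists. Carrying out that bookkeeping is the crux.
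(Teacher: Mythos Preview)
Your overall architecture is exactly the paper's: build the flare greedily, and reduce the degree condition to a bound of the form ``$h'$ and $x$ have at most one common neighbour in $G\setminus H$ whenever $h'\in H$ and $x\in\Phi[h]$ with $h\neq h'$.'' This bound is precisely the paper's Lemma~3.2, and once it is in hand your count $|B_i\cap A_i|\le 2(d-2)+2(d-1)=4d-6<4d-5\le|A_i|$ goes through (indeed your use of flaplessness at distance~$2$ makes the arithmetic slightly cleaner than the paper's stated $4(d-1)$).

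The one place where your plan diverges from the paper, and where it becomes harder than it needs to be, is in the proof of the common-neighbour bound itself. You propose to hang pendants at $q_1,p,q_2$ --- one liberality witness $u$ at $p$, and two ``hole-side'' pendants $w_1,w_2$ at $q_1,q_2$ --- and you correctly anticipate that choosing $w_1,w_2$ so that $\{w_1,u,w_2\}$ is stable forces case analysis, especially when $|H|$ is small or $q_2=a_j$. The paper sidesteps all of this by hanging pendants at $p,q_1,p'$ instead: take \emph{two} liberality witnesses $w\in N(p)\setminus N(p')$ and $w'\in N(p')\setminus N(p)$, and a single hole-neighbour $y'$ of $q_1$ (chosen anticomplete to $\{x,y\}$, where $y\in H$ is adjacent to $x$). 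Triangle-freeness makes $w,w'$ anticomplete to the $4$-cycle, and flaplessness (via the $H$-edge $q_1y'$) forces $w\not\sim y'$ and $w'\not\sim y'$; so the only possible obstruction to a $4$-sunspot is $w\sim w'$, and if that holds then $y$ is non-adjacent to one of $w,w'$, yielding a second $4$-sunspot $(x,p,q_1,p';y,w,y')$. No search for pendants, no dependence on $|H|$ beyond $|H|\ge 6$, and no case split on whether $q_2$ is a hole vertex or a flare vertex --- the argument never touches $q_2$ at all. Swapping in this trick for your pendant search would complete your proof with essentially no bookkeeping.
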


First, we need a lemma:

\begin{lemma}\label{lem:onecommon}
Let $G$ be a triangle-free $4$-sunspot-free graph that is both liberal and flapless. Let $H$ be a hole of length at least $6$ in $G$ and let $\Phi$ be an $H$-flare in $G$. Let $h,h'\in H$ be distinct and let $x\in \Phi[h]$. Then
\begin{itemize}
    \item if $h,h'$ are adjacent, then $x,h'$ have no common neighbor in $G\setminus H$; and
    \item if $h,h'$ are non-adjacent, then $x,h'$ have at most one common neighbor in $G\setminus H$.
\end{itemize} 
\end{lemma}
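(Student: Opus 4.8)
Each conclusion will be reached by exhibiting a configuration that $G$ cannot contain: a triangle, an $H$-flap, a flap of some other $6$-hole, or a $4$-sunspot. Suppose first that $h,h'$ are adjacent, and let $w\in G\setminus H$ be a common neighbour of $h'$ and $x$. If $x=h$, then $\{h,h',w\}$ is a triangle, a contradiction. If $x\in \Phi(h)$, so $xh\in E(G)$ and $x\notin H$, then triangle-freeness gives $xh'\notin E(G)$ and $wh\notin E(G)$, so $x\dd h\dd h'\dd w\dd x$ is a $4$-hole; it contains the edge $hh'$ of $H$, so it is an $H$-flap, and since $H$ has length at least $6$ this contradicts flaplessness. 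Hence no such $w$ exists, which proves the first bullet; from now on $h,h'$ are non-adjacent.

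Now suppose $hh'\notin E(G)$. If $xh'\in E(G)$ (which forces $x\neq h$), then $h'$, $x$ and any common neighbour of theirs would form a triangle, so there is none. So assume $xh'\notin E(G)$, and for a contradiction suppose $h'$ and $x$ have two distinct common neighbours $w_1,w_2\in G\setminus H$. Then $w_1w_2\notin E(G)$, so $C:=x\dd w_1\dd h'\dd w_2\dd x$ is a $4$-hole. Consider first the case $x=h$. Applying a short $H$-flap argument to one of $w_1,w_2$ and the mid-vertex of a length-$2$ arc of $H$ between $h$ and $h'$ shows that the distance between $h$ and $h'$ in $H$ is at least $3$; so one may pick a neighbour $c$ of $h$ in $H$ and a neighbour $a$ of $h'$ in $H$ with $ca\notin E(G)$, and then, by triangle-freeness and the distance bound, $c$ is anticomplete to $\{h',w_1,w_2\}$ and $a$ is anticomplete to $\{h,w_1,w_2\}$. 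By liberality there is $u\in N_G(w_1)\setminus N_G(w_2)$; triangle-freeness gives $uh,uh'\notin E(G)$, and were $uc\in E(G)$ (resp.\ $ua\in E(G)$) then $c\dd u\dd w_1\dd h\dd c$ (resp.\ $a\dd u\dd w_1\dd h'\dd a$) would be an $H$-flap. One checks that $(h',w_1,h,w_2;a,u,c)$ is then a $4$-sunspot in $G$, a contradiction.

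It remains to treat $x\neq h$, so $xh\in E(G)$, $x\notin H$ and $xh'\notin E(G)$; this is the crux. If $a$ is a neighbour of $h'$ in $H$ with $xa\in E(G)$, then $x\dd w_1\dd h'\dd a\dd x$ is a $4$-hole containing the edge $h'a$ of $H$, an $H$-flap; so no neighbour of $h'$ in $H$ is adjacent to $x$. Also, if both neighbours $a_1,a_2$ of $h'$ in $H$ were adjacent to $h$, then $a_1\dd h'\dd a_2\dd h\dd a_1$ would be an $H$-flap (its two chords are non-edges because $a_1a_2\notin E(G)$ and $hh'\notin E(G)$); so some neighbour $a$ of $h'$ in $H$ has $xa\notin E(G)$ and $ha\notin E(G)$, and moreover $a$ is anticomplete to $\{w_1,w_2\}$ and $h$ is anticomplete to $\{w_1,w_2,h'\}$. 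Now take any $u\in N_G(w_1)\setminus N_G(w_2)$ (this set is non-empty by liberality); then $ux,uh'\notin E(G)$ by triangle-freeness, and $ua\notin E(G)$ (else $a\dd u\dd w_1\dd h'\dd a$ is an $H$-flap). If $uh\notin E(G)$, then $(x,w_1,h',w_2;h,u,a)$ is a $4$-sunspot in $G$, a contradiction; so every vertex of $N_G(w_1)\setminus N_G(w_2)$ is adjacent to $h$, and, symmetrically, every vertex of $N_G(w_2)\setminus N_G(w_1)$ is adjacent to $h$. Fix $u\in N_G(w_1)\setminus N_G(w_2)$ and $u'\in N_G(w_2)\setminus N_G(w_1)$; then $u\neq u'$ and $uu'\notin E(G)$ (as $h$ is a common neighbour of $u$ and $u'$). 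One now verifies that $Z:=h\dd u\dd w_1\dd h'\dd w_2\dd u'\dd h$ is a $6$-hole --- each of its nine chords is a non-edge, by triangle-freeness together with $hh'\notin E(G)$, $w_1w_2\notin E(G)$, and the choices of $u,u'$ --- while $D:=h\dd u\dd w_1\dd x\dd h$ is a $4$-hole. Since $D$ and $Z$ share the edge $hu$ and $Z$ has length $6$, $D$ is a $Z$-flap, contradicting flaplessness. This will establish the lemma.

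The step I expect to be the main obstacle is the case $x\neq h$: the obvious $4$-hole $C=x\dd w_1\dd h'\dd w_2\dd x$ admits $h$ as a pendant at $x$ and a neighbour of $h'$ in $H$ as a pendant at $h'$, but a third pendant has to be attached at $w_1$ or at $w_2$, and nothing is known about their neighbourhoods in advance. The way around this is the dichotomy above: either liberality produces a third pendant anticomplete to $h$ --- giving a $4$-sunspot on $C$ --- or it forces the liberality-witnesses at $w_1$ and $w_2$ to be adjacent to $h$, and that forced adjacency is precisely what assembles the $6$-hole $Z$ for which the $4$-hole $D$ is a flap.
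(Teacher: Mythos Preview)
Your argument is correct, but it takes a genuinely different route from the paper in the non-adjacent case. The paper handles $x=h$ and $x\in\Phi(h)$ uniformly: it picks $y\in H$ adjacent to $x$ (taking $y=h$ when $x\neq h$) and $y'\in H$ adjacent to $h'$ with $\{h',y'\}$ anticomplete to $\{x,y\}$, applies liberality to \emph{both} common neighbours $z,z'$ to get $w\in N(z)\setminus N(z')$ and $w'\in N(z')\setminus N(z)$, observes that $(z,h',z',x;w,y',w')$ being forbidden forces $ww'\in E(G)$, and then finishes with the single $4$-sunspot $(x,z,h',z';y,w,y')$. You instead split on whether $x=h$; your $x=h$ case is close in spirit to the paper's argument, but your $x\neq h$ case is new: rather than forcing an edge between two liberality witnesses, you force each such witness to be adjacent to $h$, which lets you assemble a fresh $6$-hole $Z=h\dd u\dd w_1\dd h'\dd w_2\dd u'\dd h$ and exhibit the $Z$-flap $h\dd u\dd w_1\dd x\dd h$. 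This is a nice use of flaplessness applied to a hole other than $H$ itself. The trade-off is that the paper's proof is shorter and avoids the case distinction, while your argument shows that the second liberality witness can be replaced by a second invocation of the flapless hypothesis.
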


\begin{proof}
Suppose not. Assume that $h,h'$ are adjacent and $x,h'$ have a common neighbor $x'\in G\setminus H$. Since $\{h,h',x'\}$ is not a triangle in $G$, it follows that $x\neq h$. But now $x\dd h\dd h'\dd x'\dd x$ is an $H$-flap in $G$, a contradiction. So, we may assume that $h,h'$ are non-adjacent and $h',x$ have two common neighbors $z,z'\in G\setminus H$. Since $G$ is triangle-free, it follows that $x\dd z\dd h'\dd z'\dd x$ is a $4$-hole in $G$. We claim that:

\sta{\label{st:yy'} There are vertices $y,y'\in H$ such that $x,y,h',y'$ are all distinct, $xy,h'y'\in E(G)$, and $\{x,y\}$ and $\{h',y'\}$ are anticomplete in $G$.}

Since $H$ has length at least $6$, there are vertices $v,v'\in H$ such that $h,v,h',v'$ are all distinct, $hv,h'v'\in E(G)$, and $\{h,v\}$ and $\{h',v'\}$ are anticomplete in $G$. Thus, if $x=h$, then \eqref{st:yy'} follows by choosing $y=v$ and $y'=v'$. Assume that $x\neq h$; thus, $x\in \Phi(h)$. In this case, let $y=h$ and let $y'=v'$. Then $x,y,h',y'$ are all distinct (because $h,h',v'\in H$ are distinct, and $x\notin H$), $xy=xh\in E(G)$ (because $x\in \Phi(h)$), and $h'y'=h'v'\in E(G)$. Also, $xh'\notin E(G)$ (recall that $x\dd z\dd h'\dd z'\dd x$ is a $4$-hole in $G$), and $xy'\notin E(G)$ since otherwise $z\dd h'\dd y'\dd x\dd z$ is an $H$-flap in $G$. Moreover, $y=h$ is anticomplete to $\{h',y'\}=\{h',v'\}$ in $G$. Hence, $\{x,y\}$ and $\{h',y'\}$ are anticomplete in $G$. This proves \eqref{st:yy'}.
\medskip

Let $y,y'\in H$ be as in \eqref{st:yy'}. Since $G$ is triangle-free, it follows that $\{y,y'\}$ and $\{z,z'\}$ are anticomplete in $G$. Since $G$ is liberal and $z,z'$ are distinct and non-adjacent, we may choose a vertex $w\in N_G(z)\setminus N_G(z')$ and a vertex $w'\in N_G(z')\setminus N_G(z)$. Since $x,h'$ are common neighbors of $z,z'$, it follows that $\{x,h'\}\cap \{w,w'\}=\varnothing$. In fact, since $G$ is triangle-free, $\{x,h'\}$ and $\{w,w'\}$ are anticomplete in $G$, and $\{y,y'\}\cap \{w,w'\}=\varnothing$. Also, since $z\dd h'\dd y'\dd w\dd z$ is not an $H$-flap, it follows that $wy'\notin E(G)$, and since $z'\dd h'\dd y'\dd w'\dd z'$ is not an $H$-flap, it follows that $w'y'\notin E(G)$. Now, since $(z,h',z',x; w,y',w')$ is not a $4$-sunspot in $G$, we have $ww'\in E(G)$. Since $G$ is triangle-free, we have $\{wy,w'y\}\not\subseteq E(G)$; say $wy\notin E(G)$. But then $(x,z,h',z';y,w,y')$ is a $4$-sunspot in $G$, a contradiction. This completes the proof of \Cref{lem:onecommon}.
\end{proof}

\Cref{thm:flare} is now almost immediate:

\begin{proof}[Proof of \Cref{thm:flare}]
    Suppose not. Let $\Phi$ be a $d$-safe $H$-flare in $G$ of maximum order. Since $\Phi$ is not full, there exists $h\in H$ such that $\Phi(h)=\varnothing$. Let $D$ be the set of all vertices in $H\setminus \{h\}$ that are at distance at most $d$ from $h$ in $H$ and let $U=\bigcup_{u\in D}\Phi[u]$; thus, we have $|D|\leq 2d$ and $|U|\leq 4d$. Since $\Phi$ is $d$-safe, it follows that $N_U(h)=N_H(h)$. Also, by \Cref{lem:onecommon}, $h$ has at most $4(d-1)$ neighbors in $G\setminus (H\cup U)$ with a neighbor in $U$. Since the degree of $h$ in $G$ is at least $4d-1>4(d-1)+2$, it follows that $h$ has a neighbor $x$ in $G\setminus H$ which is anticomplete to $U$ in $G$. Define the $H$-flare $\Phi':H\rightarrow 2^{G\setminus H}$ as follows: let $\Phi'(h)=\{x\}$, and let $\Phi'(h')=\Phi(h')$ for all $h'\in H\setminus \{h\}$. Since $\Phi$ is $d$-safe and $x$ is anticomplete to $U$ in $G$, it follows that $\Phi'$ is also $d$-safe. But the order of $\Phi'$ is strictly larger than the order of $\Phi$, a contradiction. This completes the proof of \Cref{thm:flare}.
\end{proof}

\section{Part assembly}\label{sec:end}

Here we complete the proof of \Cref{thm:maintrianglefree}. We need a couple of results from the literature, beginning with the following:

\begin{theorem}[Chudnovsky, Scott, Seymour; see 1.4 in \cite{SSIII}]\label{thm:mainfromiii}
For all $k,\ell\in \poi$, there is a constant $c_{\ref{thm:mainfromiii}}=c_{\ref{thm:mainfromiii}}(k,\ell)\in \poi$ such that for every graph $G$ with $\omega(G)\leq k$ and $\chi(G)>c_{\ref{thm:mainfromiii}}$, there is a hole of length at least $\ell$ in $G$.
\end{theorem}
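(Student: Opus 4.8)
The plan is to deduce \Cref{thm:mainfromiii} from the following more structured statement, which carries all the content: for all $\kappa,\ell$ there is $c$ such that every graph $G$ with $\omega(G)\le\kappa$ and $\chi(G)>c$ has an induced subgraph admitting a levelling $(L_0,\dots,L_r)$ with $r$ as large as we wish (in particular $r\ge \ell$) and with $\chi(L_r)$ as large as we wish. Two elementary ingredients drive the reduction to such a \emph{deep} levelling. First, for a connected graph $K$ with breadth-first-search layers $N_0,N_1,\dots$ around any vertex there are no edges between non-consecutive layers, so $\chi(K)\le \chi\!\left(\bigcup_{i\text{ even}}N_i\right)\cdot\chi\!\left(\bigcup_{i\text{ odd}}N_i\right)\le(\max_i\chi(N_i))^2$, which converts large chromatic number into a layer of large chromatic number. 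Second, Gy\'arf\'as's theorem that a graph with $\omega\le\kappa$ and no induced path on $k$ vertices has chromatic number bounded in terms of $k$ and $\kappa$ forces the relevant layers to lie far from the root, so the levelling can be taken deep. Combining these (iterated and localized, after first passing to a vertex-critical induced subgraph so that every vertex has degree at least $\chi-1$) yields the deep levelling; this packaging is exactly the setup used in \cite{SSIII}.

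Given the deep levelling $(L_0,\dots,L_r)$, I would fix a breadth-first-search tree rooted in $L_0$, so every $v\in L_i$ with $i\ge1$ has a parent $p(v)\in L_{i-1}$; for $v\in L_r$ let $Q_v$ denote the tree-path from $v$ up to the root. Since edges join only consecutive levels, each $Q_v$ is automatically an \emph{induced} path of length $r$. The building block of a long hole is a pair $v,w\in L_r$ with $vw\in E(G)$ whose tree-paths are disjoint except for a common initial segment near the root and have no edges between them other than $vw$; then $Q_v\cup Q_w\cup\{vw\}$ contains a hole of length $2(r-j)+1$, where $j$ is the level of the last common vertex of $Q_v$ and $Q_w$. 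So one wants two adjacent vertices of $L_r$ that branch apart high up. The obstacles are precisely what must be overcome: a vertex on $Q_v$ may be adjacent to one on $Q_w$ at a nearby level, creating a shortcut that collapses the would-be long hole into a short one; and the edges of $L_r$ may all run \emph{within} the subtrees rooted at a given level, so that no suitable branching pair is available at that height, while confining the search into a single subtree caps the attainable hole length below $\ell$.

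The heart of the proof is therefore an iterated \textbf{cleaning}: carry out a bounded number of rounds, in each round passing to an induced subgraph that still admits a deep levelling with large chromatic number in its bottom level but in which one more species of shortcut or confinement has been destroyed, the chromatic number being sustained at each round by reapplying the layering inequality above together with a Gy\'arf\'as-type last-level argument. After finitely many rounds no obstruction survives, and a pair $v,w$ as above then exists and delivers a hole of length at least $\ell$. Bounding the number of cleaning rounds, showing that each round costs only a bounded factor in chromatic number, and checking that a round genuinely removes an obstruction without creating new ones --- this is the main difficulty, and it is exactly the technical core of \cite{SSIII}; since \Cref{thm:mainfromiii} is quoted from there, the present paper simply uses it as a black box, but the outline above is the shape of the Chudnovsky--Scott--Seymour argument I would follow.
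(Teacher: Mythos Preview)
The paper does not prove this theorem; it is quoted from \cite{SSIII} and used as a black box, exactly as you say in your last paragraph. So there is no proof in the paper to compare your sketch against, and nothing more is expected here than the citation.

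One technical slip in your outline is worth flagging. The layering inequality you write,
\[
\chi(K)\le \chi\!\Bigl(\bigcup_{i\text{ even}}N_i\Bigr)\cdot\chi\!\Bigl(\bigcup_{i\text{ odd}}N_i\Bigr)\le\Bigl(\max_i\chi(N_i)\Bigr)^2,
\]
is not valid with that justification: for an arbitrary partition $V(G)=A\cup B$ one does not have $\chi(G)\le\chi(A)\cdot\chi(B)$ (take $G=K_n$ with $|A|=1$). The correct and stronger bound is the additive one, $\chi(K)\le 2\max_i\chi(N_i)$, obtained by using one palette of $\max_i\chi(N_i)$ colours on the even layers and a disjoint palette on the odd layers; this is precisely the inequality the present paper invokes in the proof of \Cref{lem:deepleveling}. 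The rest of your high-level description (deep levellings via iterated BFS, Gy\'arf\'as-type arguments to push large chromatic number far from the root, building a long hole from two branching root-paths, and an iterated cleaning to kill shortcuts) is a fair impression of the shape of the argument in \cite{SSIII}, though the actual execution there is considerably more delicate than your sketch suggests.
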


We also need two results from \cite{SSIV}. Let $G$ be a graph. For a vertex $v\in V(G)$, we denote by $N_G^2(v)$ the set of all vertices that are at distance exactly two from $v$ in $G$.

\begin{theorem}[Scott and Seymour; see 3.1 in \cite{SSIV} and its proof]\label{thm:fromii}
Let $G$ be a triangle-free graph such that there is no $6$-hole in $G$. Then we have $\chi(N_G^2(v))\leq 2$ for every $v\in V(G)$.
\end{theorem}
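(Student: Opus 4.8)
The plan is to proceed by contradiction: suppose $G$ is triangle-free and has no $6$-hole, yet $\chi(N_G^2(v))\geq 3$ for some $v\in V(G)$. Since $N_G^2(v)$ induces a triangle-free graph that is not bipartite, it contains an odd cycle; let $C=c_1\dd\cdots\dd c_{2k+1}\dd c_1$ be a shortest one, so that $2k+1\geq 5$ and (being shortest, $C$ has no chord) $C$ is a hole in $G$ contained in $N_G^2(v)$. Put $A=N_G(v)$; this is a stable set because $G$ is triangle-free. Deleting from $A$ the vertices with no neighbour on $C$ destroys neither triangle-freeness nor the absence of $6$-holes and keeps $C$ inside the second neighbourhood of $v$, so we may assume that every vertex of $A$ has a neighbour on $C$ and that $N_G^2(v)=C$. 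For $i\in\{1,\dots,2k+1\}$ (read modulo $2k+1$ throughout) write $T_i=N_G(c_i)\cap A$; each $T_i$ is nonempty, and since $G$ is triangle-free a common neighbour of the adjacent vertices $c_i,c_{i+1}$ would lie in a triangle, so $T_i\cap T_{i+1}=\varnothing$ for every $i$.

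Next I would catalogue the $6$-holes that can be assembled from $v$, $A$ and $C$ — each of which is therefore forbidden — and extract the resulting constraints. (a) If $a\in T_{i-1}$, $b\in T_{i+1}$, $a\neq b$, $a$ is nonadjacent to $c_{i+1}$ and $b$ is nonadjacent to $c_{i-1}$, then $v\dd a\dd c_{i-1}\dd c_i\dd c_{i+1}\dd b\dd v$ is a $6$-hole; hence for every $i$ it is impossible that both $T_{i-1}\setminus T_{i+1}$ and $T_{i+1}\setminus T_{i-1}$ are nonempty, i.e.\ $T_{i-1}$ and $T_{i+1}$ are $\subseteq$-comparable (the \emph{nesting property}). (b) If $a\in A$ is adjacent to the two ends of an arc of $C$ of length $4$ but not to the middle vertex of that arc, then $a$ together with the arc is a $6$-hole; with triangle-freeness this says that whenever $a\in A$ is adjacent to the two ends of a length-$4$ arc of $C$ it is also adjacent to the midpoint. (c) If $c_ic_{i+1}$ and $c_jc_{j+1}$ are edges of $C$ that are anticomplete in $G$, and there are distinct $a,a'\in A$ with $a\in(T_i\cap T_j)\cup(T_i\cap T_{j+1})$ and $a'$ in the complementary intersection, then $a,a'$ together with these two edges form a $6$-hole. (d) If $c_i,c_j,c_k$ are pairwise nonadjacent vertices of $C$ and there are three distinct vertices of $A$ realizing the intersections $T_i\cap T_j$, $T_j\cap T_k$, $T_k\cap T_i$, then together with $c_i,c_j,c_k$ they form a $6$-hole. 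Each of (a)--(d) must fail in $G$.

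Finally comes the combinatorial heart: showing that nonempty sets $T_1,\dots,T_{2k+1}$ with consecutive ones disjoint, distance-$2$ ones nested, and no configuration of type (b),(c),(d) cannot exist. When $2k+1=5$ this follows from (a) alone: since $\gcd(2,5)=1$, stepping by $2$ makes $T_1,T_3,T_5,T_2,T_4$ a cyclic list with each consecutive pair $\subseteq$-comparable, and chasing this together with $T_i\cap T_{i+1}=\varnothing$ forces some $T_i$ to lie in the (empty) intersection of two consecutive $T_j$'s, a contradiction. For $2k+1\geq 7$ the nesting property no longer suffices on its own — one can exhibit set systems satisfying nesting and disjointness already for $2k+1=9$ — so the $v$-free configurations (c),(d) and the arc constraint (b) have to be brought in. I would take $c_i$ with $T_i$ $\subseteq$-minimal, fix $a\in T_i$, use triangle-freeness and (b) to determine $N_C(a)$ exactly, and then follow the nesting relations around the (single, as $2k+1$ is odd) ``distance-$2$'' cycle of the $T_j$'s to locate a vertex of $A$ that, with a short stretch of $C$, yields a configuration of type (c) or (d). The step I expect to be the main obstacle is precisely this last one: the nesting property is a strong but genuinely insufficient local constraint, and organizing how the $v$-free $6$-holes close the gap for long odd holes should require a fairly delicate case analysis according to how the vertices of $A$ sit on $C$ — distinguishing singleton from larger $T_i$ and controlling the positions and overlaps of the larger ones.
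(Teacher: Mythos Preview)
The paper does not prove this statement: it is quoted as a result of Scott and Seymour (reference \cite{SSIV}, their 3.1) and used as a black box in the derivation of \Cref{cor:trianglefreeno6hole}. There is therefore nothing in the paper to compare your argument to.

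As for the proposal itself, it is a plan rather than a proof, and you say so explicitly. The reduction to a shortest odd hole $C\subseteq N_G^2(v)$, the stable set $A=N_G(v)$, the sets $T_i=N_A(c_i)$, and the ``nesting'' constraint (your item (a)) are all correct, and your treatment of $|C|=5$ is complete: a local $\subseteq$-minimum $T_j$ in the distance-$2$ cycle is contained in $T_{j-2}\cap T_{j+2}$, and for a $5$-cycle $c_{j-2}c_{j+2}$ is an edge, so this intersection is empty. (The same local-minimum argument, pushed a little further, also disposes of $|C|=7$.) But from $|C|\geq 9$ onward you only sketch a strategy and explicitly flag the ``main obstacle'': your example of a set system for $2k+1=9$ satisfying nesting and consecutive disjointness is genuine, so items (b)--(d) really are needed, and you do not carry out the case analysis that would show they suffice. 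Until that analysis is written down --- or replaced by a cleaner global argument controlling $N_C(a)$ for each $a\in A$ --- the proposal is incomplete for long odd holes.
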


\begin{theorem}[Scott and Seymour; see 3.10 in \cite{SSIV}]\label{thm:fromiii}
Let $\kappa\in \poi$ and let $\ell\geq 4$ be an integer. Let $G$ be a graph with no hole of length more than $\ell$ such that $\chi(N_G(v)),\chi(N_G^2(v))\leq \kappa$ for every vertex $v$ of $G$. Then $\chi(G)\leq (2\ell-2)\kappa$.
\end{theorem}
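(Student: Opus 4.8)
The plan is to localize the bound to a single breadth-first-search layer, and then to use the absence of long holes to show that each layer is covered by a bounded number of balls of radius two.

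First I would reduce to the case where $G$ is connected (otherwise color the components separately, with the same bound for each), fix a vertex $v_0$, and let $L_i$ be the set of vertices at distance exactly $i$ from $v_0$ in $G$; so $L_0=\{v_0\}$, $L_1=N_G(v_0)$, $L_2=N_G^2(v_0)$, and every edge of $G$ joins two layers whose indices differ by at most one. Hence $G[\bigcup_{i\text{ even}}L_i]$ and $G[\bigcup_{i\text{ odd}}L_i]$ each decompose as a disjoint union of the graphs $G[L_i]$, and so, coloring the even part and the odd part from disjoint palettes, $\chi(G)\le 2\max_i\chi(L_i)$. Since $\chi(L_0)=1$, $\chi(L_1)=\chi(N_G(v_0))\le\kappa$, and $\chi(L_2)=\chi(N_G^2(v_0))\le\kappa$, the theorem reduces to showing $\chi(L_i)\le(\ell-1)\kappa$ for every $i\ge 3$.

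To prove this, fix $i\ge 3$ and a breadth-first-search tree $T$ rooted at $v_0$ consistent with the layering. For $u\in L_{i-2}$ put $C(u)=\{w\in L_i:N_G(w)\cap N_G(u)\cap L_{i-1}\neq\varnothing\}$. Then $C(u)\subseteq N_G^2(u)$, so $\chi(C(u))\le\kappa$; and the sets $C(u)$ (over $u\in L_{i-2}$) cover $L_i$, since for any $w\in L_i$ with $T$-parent $x$ and $T$-grandparent $u$ we have $x\in N_G(w)\cap N_G(u)\cap L_{i-1}$, so $w\in C(u)$. It therefore suffices to show that already $\ell-1$ of the sets $C(u)$ cover $L_i$. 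Suppose not. Then a greedy procedure yields $w_1,\ldots,w_\ell\in L_i$, each $w_j$ chosen outside $C(u_1)\cup\cdots\cup C(u_{j-1})$, where $x_j$ and $u_j$ denote the $T$-parent and $T$-grandparent of $w_j$; and unwinding the condition $w_j\notin C(u_k)$ shows that $x_ju_j,x_jw_j\in E(G)$ while, for all distinct $j,k$, we have $w_jx_k\notin E(G)$ and $x_ju_k\notin E(G)$ (and $w_j$ is non-adjacent to $u_k$ for all $j,k$, since they lie in non-consecutive layers). The remaining task is to extend each $w_j$ to its full $T$-path down to $v_0$ — which is an induced path, since it meets only consecutive layers — and then, for a pair $j,k$ whose two paths diverge at a vertex of the smallest possible layer, to clean the union of the two paths into an induced cycle of length at least $\ell$, contradicting the hypothesis on $G$.

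The main obstacle is exactly this last step, the extraction of a long hole, which is the technical heart of the argument. The trouble is that the union of two breadth-first-search-tree paths can carry many chords between vertices lying at equal or adjacent layers on the two branches, and the two branches may also reconnect close to the root; either effect can collapse the induced cycle one finds to length far below $\ell$. To prevent this one must sharpen the greedy choice of the $w_j$ (for instance always picking an as-yet-uncovered vertex whose $T$-path diverges from the earlier ones at the lowest possible layer) and bring in structural control of graphs with no hole of length at least $\ell$; this is the only place where that hypothesis is used, and it is what forces the constant $\ell-1$, and hence the final bound $(2\ell-2)\kappa$.
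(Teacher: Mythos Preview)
The paper does not prove this theorem at all: it is quoted from Scott and Seymour \cite{SSIV} as a black box and used only in the derivation of \Cref{cor:trianglefreeno6hole}. So there is no ``paper's own proof'' to compare against.

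As for your attempt, the overall architecture --- reduce to connected, take BFS layers, bound $\chi(G)\le 2\max_i\chi(L_i)$, then cover each deep layer $L_i$ by at most $\ell-1$ sets of the form $C(u)\subseteq N_G^2(u)$ --- is indeed the shape of the Scott--Seymour argument. But your write-up is explicitly unfinished at the only nontrivial point: you say yourself that extracting a hole of length at least $\ell$ from the BFS-tree paths of $w_1,\ldots,w_\ell$ is ``the main obstacle'' and you do not carry it out. That is the whole content of the theorem; without it you have only reproved the easy layering inequality. There is also a slip in the setup: from $w_j\notin C(u_k)$ you deduce $w_jx_k\notin E(G)$ and $x_ju_k\notin E(G)$ ``for all distinct $j,k$'', but the greedy choice only gives these for $j>k$, not for $j<k$. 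This asymmetry matters when you try to build the hole, and is one reason the naive ``take the two $T$-paths and clean'' idea does not go through directly.
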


Combining these three results yields the following:

\begin{corollary}\label{cor:trianglefreeno6hole}
    For every integer $\ell\geq 6$, there exists $c_{\ref{cor:trianglefreeno6hole}}=c_{\ref{cor:trianglefreeno6hole}}(\ell)\in \poi$ such that for every triangle-free $G$ with $\chi(G)>c_{\ref{cor:trianglefreeno6hole}}$, there is a hole of length at least $\ell$ in $G$. Moreover, $c_{\ref{cor:trianglefreeno6hole}}(6)=20$ works.
\end{corollary}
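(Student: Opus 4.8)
The plan is to deduce \Cref{cor:trianglefreeno6hole} by a short case analysis on whether $G$ contains a $6$-hole, combining the three imported results. Fix $\ell\geq 6$. First I would apply \Cref{thm:mainfromiii} with $k=2$ to obtain the constant $c_{\ref{thm:mainfromiii}}(2,\ell)$, so that every triangle-free graph of chromatic number exceeding this constant already has a hole of length at least $\ell$; this is the "easy" regime. The work is to show that if $\chi(G)$ is large and $G$ has \emph{no} $6$-hole, we still get a long hole, which is really a statement that triangle-free graphs with no $6$-hole and no long hole have bounded chromatic number.

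For that, suppose $G$ is triangle-free and has no $6$-hole, and suppose toward a contradiction that $G$ has no hole of length at least $\ell$. I would like to apply \Cref{thm:fromiii} with $\kappa=2$, but that requires $\chi(N_G(v))\leq 2$ and $\chi(N_G^2(v))\leq 2$ for every vertex $v$. The first bound is immediate: $G$ is triangle-free, so $N_G(v)$ is a stable set and $\chi(N_G(v))\leq 1\leq 2$. The second bound is exactly \Cref{thm:fromii}, since $G$ has no $6$-hole. Hence \Cref{thm:fromiii} (with this $\ell$ and $\kappa=2$) gives $\chi(G)\leq (2\ell-2)\cdot 2=4\ell-4$. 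So in the no-$6$-hole regime, $\chi(G)\leq 4\ell-4$ forces the existence of a hole of length at least $\ell$ once $\chi(G)>4\ell-4$.

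Putting the two regimes together, I would set
$$c_{\ref{cor:trianglefreeno6hole}}(\ell)=\max\{\,c_{\ref{thm:mainfromiii}}(2,\ell),\ 4\ell-4\,\}.$$
Then for triangle-free $G$ with $\chi(G)>c_{\ref{cor:trianglefreeno6hole}}(\ell)$: if $G$ has a $6$-hole, note $6\geq \ell$ is false in general, so this case needs a little care — actually when $\ell=6$ a $6$-hole is already a hole of length at least $\ell$ and we are done, while for $\ell>6$ having a $6$-hole is not by itself enough, so I would instead phrase the dichotomy as "either $\chi(G)>c_{\ref{thm:mainfromiii}}(2,\ell)$ gives a long hole directly, or we may assume $G$ has no $6$-hole (since the presence of any hole is handled once $\chi$ is large enough via \Cref{thm:mainfromiii}), and then $\chi(G)>4\ell-4$ contradicts the bound from \Cref{thm:fromiii} unless a hole of length at least $\ell$ exists." Cleanest is: if $\chi(G)>c_{\ref{thm:mainfromiii}}(2,\ell)$ we are done by \Cref{thm:mainfromiii}; this already covers everything since $c_{\ref{cor:trianglefreeno6hole}}(\ell)\geq c_{\ref{thm:mainfromiii}}(2,\ell)$. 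The role of \Cref{thm:fromii} and \Cref{thm:fromiii} is only to pin down the explicit value for $\ell=6$: there $c_{\ref{thm:mainfromiii}}(2,6)$ is not given numerically, so instead I observe that a triangle-free graph with $\chi(G)>20=4\cdot 6-4$ must contain a $6$-hole (else \Cref{thm:fromii} and \Cref{thm:fromiii} bound $\chi(G)$ by $20$), and a $6$-hole is a hole of length at least $6$; hence $c_{\ref{cor:trianglefreeno6hole}}(6)=20$ works.

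The only subtlety — and the step I would be most careful about — is not a mathematical obstacle but the bookkeeping of which constant does what: for general $\ell$ the existence of the constant is immediate from \Cref{thm:mainfromiii} alone, whereas the explicit bound $20$ for $\ell=6$ genuinely uses the \Cref{thm:fromii}/\Cref{thm:fromiii} route because it converts "no $6$-hole" into an absolute $\chi$-bound of $4\ell-4=20$. I would write the proof so that these two purposes are clearly separated, verify the trivial inputs to \Cref{thm:fromiii} ($N_G(v)$ stable, $\chi(N_G^2(v))\leq 2$ from \Cref{thm:fromii}), and record the arithmetic $(2\cdot 6-2)\cdot 2=20$.
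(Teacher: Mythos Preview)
Your approach is essentially the paper's: existence of $c(\ell)$ for general $\ell$ comes straight from \Cref{thm:mainfromiii} with $k=2$, and the explicit value $20$ for $\ell=6$ comes from feeding \Cref{thm:fromii} (no $6$-hole $\Rightarrow$ $\chi(N_G^2(v))\le 2$) and triangle-freeness ($\chi(N_G(v))\le 1$) into \Cref{thm:fromiii} with $\ell=6$ and $\kappa=2$. Your second paragraph sets this up correctly.

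One small slip to clean up in your final paragraph: the sentence ``a triangle-free graph with $\chi(G)>20$ must contain a $6$-hole (else \Cref{thm:fromii} and \Cref{thm:fromiii} bound $\chi(G)$ by $20$)'' is not quite right as stated. The ``else'' branch only gives you ``no $6$-hole,'' which is enough for \Cref{thm:fromii} but \emph{not} for \Cref{thm:fromiii}: to invoke \Cref{thm:fromiii} with $\ell=6$ you need ``no hole of length at least $6$,'' and a graph could avoid $6$-holes while still containing, say, a $7$-hole. The fix is exactly what you had earlier and what the paper does: assume for contradiction that $G$ has no hole of length at least $6$; this in particular forbids $6$-holes (so \Cref{thm:fromii} applies) and directly supplies the hypothesis of \Cref{thm:fromiii}, yielding $\chi(G)\le (2\cdot 6-2)\cdot 2=20$. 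The detour through ``has a $6$-hole / has no $6$-hole'' is unnecessary and is what led you astray; drop it and the proof is clean.
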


\begin{proof}
    The existence of $c_{\ref{cor:trianglefreeno6hole}}$ follows from \Cref{thm:mainfromiii} applied to $k=2$ and $\ell$. Also, by \Cref{thm:fromiii,thm:fromii}, we have 
    $$\chi(G)\leq 2(2\times 6-2)=20$$
    for every triangle-free graph $G$ with no hole of length at least $6$.
\end{proof}

We are now ready to prove our main result, which we restate:

\maintrianglefree*

\begin{proof}
    Let $\tau=c_{\ref{cor:trianglefreeno6hole}}(\ell)$ be as in \Cref{cor:trianglefreeno6hole}, where $c_{\ref{cor:trianglefreeno6hole}}(6)=20$. Let $$c_{\ref{thm:maintrianglefree}}=c_{\ref{thm:maintrianglefree}}(\ell)=2\max\{\tau,4\ell-1\}+1.$$
    Note, in particular, that $c_{\ref{thm:maintrianglefree}}(6)=47$. We will show that the above value of $c_{\ref{thm:maintrianglefree}}$ satisfies the theorem. 

    Suppose for a contradiction that there is a graph $G$ with $\chi(G)>c_{\ref{thm:maintrianglefree}}$ which is triangle-free, $4$-sunspot-free, and $t$-sun-free for all $t\geq \ell$. By \Cref{thm:flap}, $G$ has a non-degenerate, liberal, and flapless induced subgraph $L$ with $\chi(L)=\max\{\tau,4\ell-1\}+1$. Since $\chi(L)>\tau$, by \Cref{cor:trianglefreeno6hole}, there is a hole of length at least $\ell\geq 6$ in $L$. Also, since $L$ is non-degenerate and $\chi(L)>4\ell-1$, it follows that $L$ has minimum degree at least $4\ell-1$.

    Let $H$ be a shortest hole of length at least $\ell$ in $L$; say $H$ has length $t\geq \ell$. Since $L$ is both liberal and flapless, and the minimum degree of $L$ is at least $4\ell-1$, it follows from \Cref{thm:flare} that there is a full $\ell$-safe $H$-flare $\Phi$ in $L$. For every $h\in H$, let $\Phi(h)=\{x_h\}$. We further claim that:

    \sta{\label{st:anticompleaves} Let $h,h'\in H$ be distinct. Then $x_h$ is anticomplete to $\{h',x_{h'}\}$ in $L$.}

    Suppose not. Let $h,h'\in H$ be distinct such that $x_h$ is not anticomplete to $\{h',x_{h'}\}$ in $L$, and subject to this property, the distance in $H$ between $h,h'$ is minimum. Let $P_1$ be a shortest path in $H$ from $h$ to $h'$, and let $P_2$ be the other path in $H$ from $h$ to $h'$. Since $\Phi$ is $\ell$-safe, it follows that both $P_1$ and $P_2$ have length at least $\ell+1$. From the choice of $h,h'$, it follows that $x_h$ is anticomplete to $P_1\setminus \{h'\}$ and $x_{h'}$ is anticomplete to $P_1\setminus \{h\}$. Also, since $x_h$ is not anticomplete to $\{h',x_{h'}\}$, it follows that there is a path $Q$ in $L$ from $h$ to $h'$ such that the interior of $Q$ is contained in $\{x_h,x_{h'}\}$; in particular, $Q$ has length $2$ or $3$. Consequently, $H'=h\dd P_1\dd h'\dd Q\dd h$ is a hole of length at least $\ell+3$ in $L$. On the other hand, since $P_2$ has length at least $\ell+1\geq 7$ and $Q$ has length at most $3$, it follows that $P_2$ is strictly longer than $Q$. But then $H'$ is a hole of length at least $\ell$ in $L$ that is strictly shorter than $H$, a contradiction. This proves \eqref{st:anticompleaves}.
    \medskip

    From \eqref{st:anticompleaves}, it follows that the induced subgraph of $L$ with vertex set $H\cup \{x_h:h\in H\}$ is a $t$-sun, a contradiction because $t\geq \ell$. This completes the proof of \Cref{thm:maintrianglefree}.
\end{proof}

\section{Acknowledgment}

This work was partly done when the first author attended the 2025 Barbados Graph Theory Workshop at Bellairs Research Institute in Holetown, Barbados. We thank the organizers for the invitation and for providing an engaging work environment. We also thank Xinyue Fan and Sahab Hajebi for helpful discussions, and the anonymous referees for their careful reading of the paper.

\bibliographystyle{plain}
\bibliography{ref}

\end{document}